\theoremstyle{plain}
\newtheorem{thm}{Theorem}[section]
\newtheorem{lemma}[thm]{Lemma}
\newtheorem{cor}[thm]{Corollary}
\newtheorem{prop}[thm]{Proposition}
\newtheorem{scholium}[thm]{Scholium}
\newtheorem{theorem}[thm]{Theorem} 
\newtheorem*{theorem*}{Theorem}
\numberwithin{equation}{section}
\newtheorem{letterthm}{Theorem}%[section]
\theoremstyle{definition}
\newtheorem{defn}[thm]{Definition}
\newtheorem{definition}[thm]{Definition}
\newtheorem{remark}[thm]{Remark}
\newtheorem{example}[thm]{Example}
\newtheorem{Open questions}[thm]{Open questions}
\newtheorem{Open question}[thm]{Open question}
\newtheorem{Open problems}[thm]{Open problems}
\newtheorem{Open problem}[thm]{Open problem}
\def\dc{{\mathfrak{m}}}   % this used to be the departure function {{\rm{DCyc}}}
\def\g{{\gamma}}
\def\Z{\mathbb{Z}}
\def\N{\mathbb{N}}
\def\Area{\hbox{\rm Area}}
\def\CL{\hbox{\rm CL}} 
\def\F+L{\hbox{$\textup{F}\!_+\textup{L}$}}
\def\ssm{\smallsetminus}
\def\e{\varepsilon}
\def\l{\lambda}
\def\onto{{\kern3pt\to\kern-8pt\to\kern3pt}}
\def\<{\langle}
\def\>{\rangle}
\def\|{{\ |\ }}
\def\half{{\frac{1}{2}}}
\def\a{\alpha}
\def\g{\gamma}
\def\G{\Gamma}
\def\d{\delta}
\def\m{{\mathfrak{m}}}
\def\F3{${\rm{F}}_3$ }
\def\disp{{\rm{Dist}}_P^{G\times G}}
\def\dist{{\rm{Dist}}}
\def\*{^{\star}}
\begin{document}

\title{Conjugacy in fibre products, distortion, 
and the geometry of cyclic subgroups}

% author  information
\author[Martin R.   Bridson]{Martin R.  ~Bridson}
\address{Mathematical Institute\\
Andrew Wiles Building\\
Woodstock Road\\
Oxford,   OX2 6GG}
\email{bridson@maths.ox.ac.uk}

\keywords{fibre products,  conjugator length,  cyclic subgroups,  rel-cyclics Dehn function}

\subjclass[2010]{20F67,   20F10,  20F65} 

%\date{Edited following referee comments,  May 2026}  

\begin{abstract} 
We investigate the complexity of the conjugacy problem for 
fibre products in torsion-free hyperbolic groups. 
Let $G$ be a torsion-free hyperbolic group and let $P<G\times G$ be the fibre product
associated to an epimorphism $G\onto Q$.  
We establish  inequalities  that relate the conjugator length function of $P$ to the geometry of  cyclic subgroups in $Q$,  the Dehn function of $Q$,  the {\em rel-cyclics Dehn function} of $Q$, and the distortion of $P$ in $G\times G$.  These estimates provide tools for
extending the library of (large) functions that are known to arise as the conjugator length functions  of finitely generated and finitely presented groups.
\end{abstract}

\maketitle

\section{Introduction}  If a subdirect product of torsion-free hyperbolic groups
$P<G_1\times\dots\times G_m$  
projects to a  subgroup
of finite index in each pair of factors $G_i\times G_j$, then $P$ is finitely
presented and its conjugacy problem can be solved using algorithms in an 
associated virtually nilpotent group \cite{BM, BHMS}.
If $P$ does not virtually surject to each $G_i\times G_j$, then the situation is
much wilder. It is this wildness that we seek to quantify in this article,
concentrating on the case  $m=2$.

In the  case $m=2$, there
is a  correspondence between subdirect products and {\em fibre products}: if $P<G_1\times G_2$ is subdirect
then there is an isomorphism
$G_1/(P\cap G_1)\cong G_2/(P\cap G_2)$ and $P$ is the
fibre product of  the maps $p_i:G_i\onto G_i/(P\cap G_i)$;
see \cite{BM}.
The quotient $Q:=G_1/(P\cap G_1)$ will be finitely presented if and only if $P$ is finitely generated. 

It has been known for a long time that if the word problem in $Q$ in unsolvable, then the conjugacy problem in $P$
is unsolvable \cite{mihailova, cfm:thesis, BBMS},  but the following characterisation of exactly when the conjugacy
problem in $P$ is solvable was only proved  recently \cite{mb:iff}.

\begin{letterthm}[\cite{mb:iff}] \label{t:iff}  Let  $G_1$ and $G_2$
be torsion-free hyperbolic groups,    let $P<G_1\times G_2$ be a finitely generated
subdirect product, and let $Q$ be the finitely presented group $G_1/(G_1\cap P)$.   Then,   
the following conditions are equivalent:
\begin{enumerate}
\item the conjugacy problem in $P$ is solvable;
\item there is a uniform algorithm to decide membership of cyclic subgroups in $Q$;
\item  the rel-cyclics Dehn function $\delta_Q^c(n)$ is recursive.
\end{enumerate}  
\end{letterthm}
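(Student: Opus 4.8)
The plan is to prove the equivalences $(1)\Leftrightarrow(2)$ and $(2)\Leftrightarrow(3)$ separately. The link between $(2)$ and $(3)$ is the routine ``decidable membership $\Leftrightarrow$ recursive relative isoperimetric function'' dictionary, while the link between $(1)$ and $(2)$ is where the work lies. Write $G:=G_1=G_2$, $p:=p_1=p_2\colon G\onto Q$, so $N:=\ker p=P\cap G_1$ and $P=\{(g,h)\in G\times G: p(g)=p(h)\}$, sitting in the exact sequences $1\to 1\times N\to P\xrightarrow{\pi_1}G\to 1$ and $1\to N\times N\to P\to Q\to1$. Since $G$ is torsion-free hyperbolic, conjugacy in $G$ is decidable and, for $g\ne1$, the centraliser $C_G(g)=\langle\rho_g\rangle$ is infinite cyclic with the root $\rho_g$, and the exponent $m$ in $g=\rho_g^{\,m}$, computable. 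First dispose of the degenerate cases: if $G$ is elementary or $Q$ is finite then $P$ has finite index in $G\times G$ and all three conditions hold by classical facts (for $(2)$, by quasiconvexity of cyclic subgroups of hyperbolic groups); so assume henceforth $G$ non-elementary and $N\ne1$, hence $N$ non-elementary.

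The crucial preliminary is a conjugacy criterion in $P$. For $(a_1,a_2),(b_1,b_2)\in P$ with each $a_i\ne1$, one shows they are $P$-conjugate iff $a_i\sim_{G_i}b_i$ for $i=1,2$ and, after choosing $G_i$-conjugators $g_i$ and setting $z_i:=\rho_{a_i}$,
\[
p(g_1^{-1}g_2)\ \in\ \langle p(z_1)\rangle\,\langle p(z_2)\rangle\ \subseteq\ Q ,
\]
which drops out of intersecting the coset of conjugators $(g_1,g_2)\bigl(C_{G_1}(a_1)\times C_{G_2}(a_2)\bigr)$ with $P$. The key point is that these two cyclic subgroups of $Q$ are constrained: writing $a_i=z_i^{\,m_i}$, the equality $p(a_1)=p(a_2)$ forces $s^{m_1}=t^{m_2}=:r$ in $Q$, with $s:=p(z_1)$, $t:=p(z_2)$, $r=p(a_1)$; since $r$ is then central in $\langle s,t\rangle$,
\[
\langle s\rangle\langle t\rangle\ =\ \bigcup_{0\le i<m_1,\ 0\le j<m_2}s^{\,i}t^{\,j}\langle r\rangle ,
\]
a finite union of cosets of the single cyclic subgroup $\langle r\rangle$, so that membership in $\langle s\rangle\langle t\rangle$ is a finite Boolean combination of cyclic-subgroup membership questions in $Q$. (The cases $a_i=1$ reduce at once to conjugacy in $G$.) Now $(2)\Rightarrow(1)$ is immediate: decide $a_i\sim_{G_i}b_i$ in the hyperbolic groups $G_i$, compute $z_i,m_i,r$ and $v:=p(g_1^{-1}g_2)$, and settle $v\in\langle s\rangle\langle t\rangle$ by running the uniform cyclic-membership algorithm on the finitely many instances $(s^{\,i}t^{\,j})^{-1}v\in\langle r\rangle$.

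For $(1)\Rightarrow(2)$ I would first extract the word problem of $Q$. Pick a generator $(x,y)$ of $P$ with $x\ne_G y$ (none existing means $P=\Delta$, $N=1$, a degenerate case), so $c:=xy^{-1}$ is a nontrivial --- hence loxodromic --- element of $N$; by an effective search (enumerating $g\in G$ and computing roots in $G$) find $g$ making $\nu:=c\,(gcg^{-1})$ not a proper power of $G$, which exists since $G$ is non-elementary. Then, by the criterion, for any word $w$ over the generators of $G$ one has $w=_Q1$ iff $(\hat w\nu\hat w^{-1},\nu)$ and $(\nu,\nu)$ --- both in $N\times N\subseteq P$ --- are conjugate in $P$, the criterion reducing this to $p(\hat w)\in\langle p(\nu)\rangle=\{1\}$. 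Armed with the word problem of $Q$ one can enumerate $N$; so given words $u,s$ over the generators of $Q$, search the coset $\hat sN$ for an $\hat s'$ that is not a proper power of $G$ (one exists since $N$ is non-elementary), test whether $[u,s]=_Q1$, and if so decide $u\in_Q\langle s\rangle$ by asking whether $(\hat s',\hat s')$ and $(\hat s',\hat u\hat s'\hat u^{-1})$ --- both in $P$ precisely because $[u,s]=_Q1$ --- are $P$-conjugate: by the criterion, with $z_1=z_2=\hat s'$, this holds iff $u\in\langle p(\hat s')\rangle=\langle s\rangle$; and if $[u,s]\ne_Q1$ then $u\notin\langle s\rangle$ trivially. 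Finally, $(2)\Leftrightarrow(3)$ is standard: $\delta_Q^c$ dominates the ordinary Dehn function of $Q$ (take $s$ trivial), so $(3)$ yields the word problem of $Q$ and then a bounded search over $\delta_Q^c(|u|+|s|)$ decides $u\in_Q\langle s\rangle$; conversely $(2)$ lets one compute, for each of the finitely many short pairs $(u,s)$ with $u\in_Q\langle s\rangle$, the least relative van Kampen complexity, hence $\delta_Q^c$.

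The step I expect to be the crux is the conjugacy criterion together with the collapse of $\langle p(z_1)\rangle\langle p(z_2)\rangle$ to a finite union of cosets of one cyclic subgroup: without the common-power constraint $s^{m_1}=t^{m_2}$, conjugacy in $P$ would only reduce to the membership problem for \emph{products} of two cyclic subgroups of $Q$, a priori strictly harder than membership in cyclic subgroups. A secondary point is that the ``generic non-power'' lemmas feeding the $(1)\Rightarrow(2)$ bootstrap must be made effective using only the word problem of $G$ and the computability of roots --- the word problem of $Q$ being precisely what that bootstrap produces --- while the remaining ingredients (conjugacy and root computations in torsion-free hyperbolic groups, constructive membership in $P$, and the relative-isoperimetric bookkeeping) are standard.
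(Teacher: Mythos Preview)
This theorem is not proved in the present paper: it is quoted from \cite{mb:iff} as motivation, and the paper's own contributions are the quantitative companions Theorems~\ref{t:main}--\ref{t:final}. There is therefore no proof here to compare against.

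Your strategy is nonetheless sound and aligns closely with the analysis implicit in the paper's quantitative arguments. The conjugacy criterion you isolate --- reducing $P$-conjugacy of $(a_1,a_2)$ and $(b_1,b_2)$ to $G_i$-conjugacy together with membership of $p(g_1^{-1}g_2)$ in a product of two cyclic subgroups of $Q$ --- is exactly what drives the proofs of Propositions~\ref{p:relLower} and~\ref{p:upper}, where the coset $\mathcal{Z}_{UV}$ of $(G\times G)$-conjugators is intersected with $P$. Your observation that the constraint $p(z_1)^{m_1}=p(z_2)^{m_2}$ collapses $\langle p(z_1)\rangle\langle p(z_2)\rangle$ to finitely many cosets of a single cyclic subgroup is the decisive reduction for $(2)\Rightarrow(1)$. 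The equivalence $(2)\Leftrightarrow(3)$ is indeed routine; the argument is parallel to Proposition~\ref{p:semigps}, with the summand $|pn|$ in Definition~\ref{d:d^c} providing the recursive bound on $|p|$ that makes the search terminate.

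Two points to tighten. First, you pass without comment from the general $G_1,G_2$ of the statement to the symmetric case $G_1=G_2$, $p_1=p_2$; your argument adapts (run parallel constructions in $G_1$ and $G_2$, lifting $s,u$ separately into each), but this should be said. Second, as you correctly flag, the effective existence of primitive elements in prescribed cosets of $N$ --- needed both to construct $\nu$ and to find $\hat s'$ --- requires justification: the claim that every coset of a non-elementary normal subgroup of a torsion-free hyperbolic group contains a non-proper-power is true but not immediate, and deserves either a short argument or a reference.
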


The main purpose of this article is to provide quantitative companions to this theorem, concentrating
on the case where $G_1=G_2$ and $p_1=p_2$. These results are intended as tools 
that can be used to  extend the library of functions that are
known to arise as conjugator length functions of finitely generated and finitely presented groups \cite{BRS},
a project that is being pursued with Tim Riley \cite{BR}. 
By definition, the {\em conjugator length function} of a group $G$ is
$$
\CL_G(n) := \max_{u,v} \min_\g \{ |\g|_G \colon u,v,\g\in G,\ \g^{-1}u\g=v,\ 
|u|_G+|v|_G\le n\},
$$ 
where $|\g|_G$ denotes distance from the identity in the
word metric associated to a fixed finite generating set for $G$.  
Given a pair of finitely generated groups $G<\G$, 
we also consider the function $\CL_G^\G(n)$ obtained by replacing 
the condition $|u|_G+|v|_G\le n$ with $|u|_\G+|v|_\G\le n$, still measuring $|\gamma|_G$ in $G$.
We refer to \cite{BRS} for an account of what is known about conjugator length functions.

An important theme running throughout this article is that the complexity of the
conjugacy problem in a fibre product $P$ depends not only on the nature of the word problem in 
the quotient $Q$,
but also on the geometry of cyclic subgroups  in $Q$.  This is why our 
analysis involves not only the
Dehn function $\delta_Q(n)$ of $Q$ but also the {\em rel-cyclics Dehn function} $\d_Q^c(n)$.
Whereas 
$\delta_Q(n)$ bounds the number of relators of $Q$ that one
must apply to reduce null-homotopic  words $w$ with $|w|\le n$ to the empty word,
$\d_Q^c(n)$ bounds the number of relators that one must apply to reduce null-homotopic  words
of the form $wu^{-p}$ to the empty word, where $|w| + |u|\le n$ and $|p|$ is minimal;
see Definition \ref{d:d^c}.    

By design,  
$\d_Q^c(n)$ controls both the Dehn function $\d_Q(n)$ and the distortion of cyclic subgroups in $Q$.
We will be particularly interested in groups that have 
{\em uniformly quasigeodesic cyclics}, i.e.~there is a constant $\lambda>0$
such that $|q^n|_Q\ge \lambda n$ for all $q\in Q\ssm\{1\}$. Torsion-free
hyperbolic and CAT$(0)$ groups have this  property, as do their finitely
generated subgroups and many other groups (see Section \ref{s:UQC}).
In the  
hyperbolic case, infinite cyclic subgroups enjoy an additional monotonicity property
that will be crucial for us (Proposition \ref{p:roots}): there is a constant $k>0$ such that $|h^i|_H\le k|h^p|_H$ for all $h\in H$ and all $0<i<p$.

It is standard practice in geometric group theory to write $f \preceq g$ for functions $f,g:\N\to\N$
if there exists a 
positive integer  $C$ such that
$
f(n) \leq C g(Cn+ C) + Cn + C
$
for all $n \in \N$. One writes $f\simeq g$ if  $f \preceq g$ and $g  \preceq f$.

\begin{restatable}{letterthm}{mainthm} \label{t:main}
Let $Q$ be a finitely presented group, let $G\onto Q$ be an epimorphism from a 
torsion-free hyperbolic group, and let $P<G\times G$ be the associated fibre
product. Then,
$$
\d_Q(n) \preceq \CL_P(n)\preceq  \CL_P^{G\times G}(n) \preceq \d_Q^{c}(n).
$$
If  $Q$ is torsion-free, then 
$$ 
\d_Q^{c}(n )\preceq n\, \CL_P^{G\times G}(n), 
$$ 
and if $Q$ has uniformly quasigeodesic cyclics, then 
$$
\delta_Q^{c}(n)\simeq \CL_P^{G\times G}(n),
$$
provided $\CL_P^{G\times G}(n)\succeq n^2$.
\end{restatable}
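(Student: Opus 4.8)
Since $\CL_P^{G\times G}(n)\preceq\delta_Q^c(n)$ is part of the first chain of inequalities in the statement, the substance of the final assertion is the reverse inequality $\delta_Q^c(n)\preceq\CL_P^{G\times G}(n)$, under the hypotheses that $Q$ has uniformly quasigeodesic cyclics and that $\CL_P^{G\times G}(n)\succeq n^2$. First I would observe that uniform quasigeodesicity of cyclics forces $Q$ to be torsion-free: if $q\neq 1$ and $q^m=1$ with $m>0$, then $0=|q^m|_Q\ge\lambda m>0$. Hence the inequality $\delta_Q^c(n)\preceq n\,\CL_P^{G\times G}(n)$ from the torsion-free part of the theorem is available, and the whole problem is to remove that factor of $n$. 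The plan is to revisit the construction that yields $\delta_Q^c(n)\preceq n\,\CL_P^{G\times G}(n)$ and to sharpen it, using both hypotheses, to a bound of the form
\[ \delta_Q^c(n)\ \preceq\ \CL_P^{G\times G}(n)\ +\ n^2 ; \]
the hypothesis $\CL_P^{G\times G}(n)\succeq n^2$ then absorbs the quadratic term and gives $\delta_Q^c(n)\simeq\CL_P^{G\times G}(n)$.

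Fix a null-homotopic word $W=wu^{-p}$ over the presentation of $Q$ with $|w|+|u|\le n$ and $|p|$ minimal, as in Definition \ref{d:d^c}. If $u$ is trivial in $Q$ then $p=0$ and $W=_Q w$, so $\mathrm{Area}_Q(W)\le\delta_Q(n)\preceq\CL_P^{G\times G}(n)$ by the first chain; so assume $u$ represents an element $q$ of infinite order in $Q$. The hypothesis on cyclics is now used twice. Since $u^p=_Q w$, it gives $\lambda|p|\le|u^p|_Q=|w|_Q\le n$, so $|p|\le\lambda^{-1}n$ and the word $u^p$ has length at most $\lambda^{-1}n^2$. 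And, following the strategy behind the bound with the spurious factor, I would attach to $W$ the conjugacy problem in the fibre product $P$ of deciding whether the diagonal element $(\tilde u,\tilde u)$ is conjugate to $(\tilde w^{-1}\tilde u\tilde w,\tilde u)$ (with $\tilde u,\tilde w$ fixed lifts of the words $u,w$ to $G$); both these elements have $G\times G$-length $O(n)$, and they are conjugate in $P$, since $(\tilde w,\tilde u^p)=(\tilde w\tilde u^{-p},1)\cdot(\tilde u,\tilde u)^p$ lies in $P$ — the factor $\tilde w\tilde u^{-p}$ lies in $N$ because $w=_Q u^p$ — and conjugates the first element to the second. Consequently there is a $P$-conjugator $\gamma$ with $|\gamma|_P\le\CL_P^{G\times G}(Cn)$ for a suitable constant $C$, and the remaining task is to turn $\gamma$ into a van Kampen diagram over $Q$ whose boundary word is $W$. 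This conversion is the reverse of the mechanism behind $\CL_P^{G\times G}\preceq\delta_Q^c$, and the single place where the general argument spends a factor of $n$ is in filling the ``cyclic part'' of that diagram, namely the subdiagram that displays $u^p$ against a chosen normal form for the element it represents. Under the present hypotheses I would instead fill this cyclic part by an explicit staircase built from $|p|\le\lambda^{-1}n$ copies of a single bounded diagram, of total area $O(|p|^2)=O(n^2)$; controlling the successive normal forms of $u,u^2,\dots,u^{p}$ along the staircase is where one genuinely uses that cyclics are well-behaved, and I expect to carry this out inside the hyperbolic group $G$, invoking the monotonicity of powers in its infinite cyclic subgroups (Proposition \ref{p:roots}) together with the fact that any lift of $q$ meets $N$ only in the identity.

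The crux, and the step I expect to be the main obstacle, is this cyclic-filling argument: obtaining the $O(n^2)$ estimate uniformly while keeping quantitative control of the passage between words over $G$ and words over $Q$ — where the distortion of $N$ in $G$, itself governed by $\delta_Q$, must be prevented from re-entering through a careless bound — and verifying that the $P$-conjugator $\gamma$ contributes only an $\CL_P^{G\times G}(Cn)$-sized portion of the final diagram rather than some concealed multiple of it. A subsidiary point is to check that the degenerate configurations ($w$ trivial, $p=0$, $u$ a proper power in $Q$, or $|\gamma|_P$ already of size $O(n)$) are all dominated by $\delta_Q(n)$, hence by $\CL_P^{G\times G}(n)$, so that none of them disturbs the sharpened estimate.
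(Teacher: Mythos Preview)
Your target $\delta_Q^c(n)\preceq\CL_P^{G\times G}(n)+n^2$, the observation that UQC forces $Q$ torsion-free with $|p|\le\lambda^{-1}n$, and the conjugacy setup between $(\tilde u,\tilde u)$ and $(\tilde w^{-1}\tilde u\tilde w,\tilde u)$ are all correct and match the paper (Proposition~\ref{p:relLower}). But the paper converts the short $P$-conjugator $\gamma$ into an area bound far more directly than you propose. Writing $\gamma$ as a geodesic word in the $P$-generators and stripping out the diagonal letters produces an equality in $G$ between the word $\tilde u^{\,p}\tilde w^{-1}$ (of length $\le|p|\,n+n$) and a word of length $\le 2|\gamma|_P$; a \emph{single} application of the linear isoperimetric inequality of $G$ to the resulting loop then yields $\Area_Q(wu^{-p})\preceq|\gamma|_P+|p|\,n\preceq\CL_P^{G\times G}(Cn)+n^2$. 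In the paper's bookkeeping this is exactly the bound $\delta_Q^z(n)\preceq\CL_P^{G\times G}(n)+n\,\m_Q(n)$ from Proposition~\ref{p:relLower}, combined with the observation that UQC forces $\m_Q(n)\simeq n$.

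Your staircase, as described, does not achieve $O(n^2)$. You appeal to Proposition~\ref{p:roots} to control the normal forms of $\tilde u^{\,i}$ in $G$, but that proposition only gives $|\tilde u^{\,i}|_G\le k\,|\tilde u^{\,p}|_G$, and nothing bounds $|\tilde u^{\,p}|_G$ by $O(n)$: UQC is a hypothesis on $Q$, and while $|u^p|_Q\le n$, the lift satisfies only $|\tilde u^{\,p}|_G\le|p|\,|\tilde u|$, which can be of order $n^2$. With intermediate forms of length $O(n^2)$, each of the $|p|\sim n$ steps of the staircase has area $O(n^2)$ in $G$ and the total is $O(n^3)$. (The phrase ``$|p|$ copies of a single bounded diagram, of total area $O(|p|^2)$'' is also internally inconsistent.) The remedy is to drop the staircase and fill the one loop in $G$ directly; Proposition~\ref{p:roots} plays no role in this direction of the theorem --- its place is in the upper bound $\CL_P^{G\times G}\preceq\delta_Q^c$, namely Proposition~\ref{p:upper}.
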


\begin{restatable}{lettercor}{maincor}\label{c:forBR} 
If $Q$ has uniformly quasigeodesic cyclics, then 
$$
\d_Q(n) \preceq \CL_P(n) \preceq    \d_Q(n^2).
$$  
\end{restatable}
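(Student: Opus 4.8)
The lower bound $\d_Q(n)\preceq\CL_P(n)$ is exactly the leftmost inequality of Theorem \ref{t:main} and uses no hypothesis on $Q$, so the plan is to concentrate entirely on the upper bound $\CL_P(n)\preceq\d_Q(n^{2})$. I would derive it from the unconditional chain $\CL_P(n)\preceq\CL_P^{G\times G}(n)\preceq\d_Q^{c}(n)$ of Theorem \ref{t:main} together with a single coarse estimate $\d_Q^{c}(n)\preceq\d_Q(n^{2})$, which is the only place the ``uniformly quasigeodesic cyclics'' hypothesis is needed. Thus everything reduces to proving this last inequality.

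To prove $\d_Q^{c}(n)\preceq\d_Q(n^{2})$, take words $w,u$ with $|w|+|u|\le n$ for which $wu^{-p}$ is null-homotopic in $Q$ with $|p|$ minimal, and estimate the length of this loop. If $w=_Q 1$ then $p=0$, $wu^{-p}=w$, and its area is at most $\d_Q(n)$. Otherwise $p\ne 0$, so $u$ is nontrivial in $Q$, and the uniform quasigeodesicity hypothesis applied to $u$ gives a fixed $\lambda>0$ with $|u^{p}|_Q\ge\lambda|p|$; combined with $|u^{p}|_Q=|w|_Q\le|w|\le n$ this yields $|p|\le n/\lambda$. Hence the word $wu^{-p}$ has length $|w|+|p|\,|u|\le n+(n/\lambda)n\le Cn^{2}$ for a constant $C=C(\lambda)$ and all $n\ge 1$, so the number of relators of $Q$ needed to fill it is at most $\d_Q(Cn^{2})$. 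Therefore $\d_Q^{c}(n)\le\d_Q(Cn^{2})$, and $\d_Q(Cn^{2})\preceq\d_Q(n^{2})$ follows straight from the definition of $\preceq$, since $\d_Q$ may be taken nondecreasing and $(C'n+C')^{2}\ge Cn^{2}$ whenever $C'\ge\sqrt C$.

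I do not expect a genuine obstacle here: the corollary is essentially a repackaging of Theorem \ref{t:main}, the one substantive point being that the uniform linear lower bound $\lambda|p|\le|u^{p}|_Q$ converts a length-$n$ input pair $(w,u)$ into a single null-homotopic loop of length $O(n^{2})$. The remaining points are routine bookkeeping: disposing of the degenerate case $w=_Q 1$; noting that a minimal $|p|$ exists (the premise supplies at least one valid $p$, and indeed every valid $p$ already satisfies $|p|\le n/\lambda$); and confirming that $\d_Q(Cn^{2})\preceq\d_Q(n^{2})$ is legitimate under the stated conventions. One could alternatively route the upper bound through the equivalence $\delta_Q^{c}(n)\simeq\CL_P^{G\times G}(n)$ of Theorem \ref{t:main}, but that carries the side condition $\CL_P^{G\times G}(n)\succeq n^{2}$ and so would force an extra case distinction, which the argument above sidesteps.
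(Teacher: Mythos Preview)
Your approach is correct and essentially identical to the paper's: both derive the upper bound from $\CL_P(n)\preceq\d_Q^{c}(n)$ (Theorem~\ref{t:main}) together with $\d_Q^{c}(n)\preceq\d_Q(n^{2})$, the latter coming from the UQC bound $|p|\le n/\lambda$, which forces $|wu^{-p}|\preceq n^{2}$. One small omission: by Definition~\ref{d:d^c} the quantity $\d_Q^{c}(n)$ includes an additive summand $|pn|$, not just $\Area(wu^{p})$, so your line ``Therefore $\d_Q^{c}(n)\le\d_Q(Cn^{2})$'' skips a beat; the paper explicitly records that $|pn|\le n^{2}/\lambda$ as well, and this extra term is then absorbed into the $\preceq$ estimate.
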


Theorem \ref{t:main}  is  proved in Section \ref{s:summary} by bringing together estimates established
in Sections \ref{s:3} to \ref{s:upper}.  Some of these estimates contain finer information.

Finitely presented groups are the main objects of interest in geometric and
combinatorial group theory and, correspondingly, 
the preceding results are of most interest when the fibre product
$P$ is finitely presented.  
{\em{The 1-2-3 Theorem}} from \cite{BBMS} states that $P$ will be finitely presented if three conditions hold:
$G$ is finitely presented,  $Q$ is of type ${\rm{F}}_3$ (i.e.~there
is a classifying space $K(Q,1)$ with a finite 3-skeleton), and the  kernel of $G\onto Q$ is finitely generated.  We shall
appeal to this criterion repeatedly in this article.

For the moment, it is unclear whether one can arrange for the Dehn function of $Q$ to be
as unconstrained as one would like if one has to ensure both
that $Q$ is of type ${\rm{F}}_3$ and that its cyclic subgroups
are controlled in the manner needed for our results. To circumvent this
problem, we describe a construction that establishes the following.

\begin{restatable}{letterprop}{makeUQC}\label{p:makeUQC}
There is an algorithm that, given a finite presentation for a
group  $Q$, will produce a finite presentation for a group $Q^\dagger$, with 
$Q^\dagger\onto Q$, so
that $Q^\dagger$ has uniformly quasigeodesic cyclics and
$$n\ \d_Q(n) \preceq \d_{Q^\dagger} (n) \preceq n\ (\d_Q(n))^2.$$
If $Q$ is of type ${\rm{F}}_3$, then so is $Q^\dagger$.
\end{restatable}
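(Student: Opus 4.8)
The plan is to realise $Q^\dagger$ as a doubled central extension of $Q$. Begin with a finite presentation $\langle X\mid R\rangle$ of $Q$; after a preliminary algorithmic modification of the presentation — adjoining finitely many generators and replacing the $r\in R$ by sufficiently generic equivalent relators (this is the delicate step; see below) — let $X'$ be a disjoint copy of $X$ and set
$$ Q^\dagger \ :=\ \big\langle\ X,\ X',\ t\ \big|\ \ [t,x]=1\ (x\in X\cup X'),\ \ \ r(X)\,t^{-1}=1=r(X')\,t^{-1}\ (r\in R)\ \big\rangle. $$
Thus $Q^\dagger$ is the amalgam $A*_{\langle t\rangle}B$, where $A$ and $B$ are the two central extensions of $Q$ by $\Z=\langle t\rangle$ obtained by declaring every defining relator of $Q$ to be equal to the central generator, and the amalgamation identifies their centres. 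The displayed presentation is finite and effectively computable, and the retraction $x\mapsto x$, $x'\mapsto 1$, $t\mapsto 1$ gives $Q^\dagger\onto Q$.

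There are then four things to verify. First, torsion-freeness: for the normalised presentation the extension class of $A$ should restrict to a generator of $H^2(C;\Z)$ on every finite cyclic $C<Q$, whence $A$ and $B$, and therefore $Q^\dagger$, are torsion-free — so that ``$Q^\dagger$ has uniformly quasigeodesic cyclics'' is even a meaningful assertion. Second, uniformly quasigeodesic cyclics: an infinite-order $g\in Q^\dagger$ is either loxodromic on the Bass–Serre tree of $A*_{\langle t\rangle}B$, in which case its translation length is at least $1$ and $|g^n|_{Q^\dagger}\ge n$, or conjugate into $A$ or $B$, in which case one reduces to knowing that $A$ and $B$ themselves have uniformly quasigeodesic cyclics (recall that vertex groups of amalgams are undistorted). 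Third, the Dehn-function estimates: for $n\,\d_Q(n)\preceq\d_{Q^\dagger}(n)$ one produces explicit efficient van Kampen diagrams inside $Q^\dagger$, the extra factor $n$ arising from the need to collect and cancel the $t$-letters generated while filling relator-loops of $Q$ (and, as with the Klein-bottle-type examples, exploiting the amalgamation); for $\d_{Q^\dagger}(n)\le n\,(\d_Q(n))^2$ one takes a null-homotopic word over $Q^\dagger$, brings it to reduced form across the amalgamation circles at cost $\preceq\d_Q^2$, and fills the resulting vertex-group subwords at total cost $\preceq n\,\d_Q^2$. Fourth, finiteness properties: a central $\Z$-extension of a group of type $\mathrm{F}_3$ is again of type $\mathrm{F}_3$, and an amalgam of two groups of type $\mathrm{F}_3$ over $\Z$ is of type $\mathrm{F}_3$, so $Q^\dagger$ inherits type $\mathrm{F}_3$ from $Q$.

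I expect the main obstacle to be arranging the preliminary normalisation of the presentation so that $A$ (and hence $B$) simultaneously is torsion-free, has uniformly quasigeodesic cyclics, and has Dehn function comparable to $n\,\d_Q$ — these requirements pull against one another, and naive choices fail: setting a commutator relator $[a,b]$ equal to a central generator forces that generator to have order $2$, and setting too many relators equal to a single central element can collapse $A$ to a finite group. The idea driving the construction is that, once the relators are installed as a generic central ``ledger'' $t$, filling a null-homotopic word of $Q$ inside $A$ leaves a precise debit in $t$; it is this debit that both renders cyclic subgroups undistorted — a compression of $\langle q\rangle$ in $Q$ becomes visible through the $t$-ledger — and inflates the Dehn function by the factor $n$. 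Verifying both of these cleanly, and then checking that the surgery across the amalgamation loses no more than the stated $\d_Q^2$ in the upper bound, is where the real work lies; the finiteness and torsion-freeness statements, by contrast, should be routine once the correct normal form is fixed.
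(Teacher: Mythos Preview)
Your construction is genuinely different from the paper's, and the difficulty you flag as ``the main obstacle'' is real and, as stated, unresolved. The crux is the claim that the central extension $A$ has uniformly quasigeodesic cyclics, and your heuristic --- that distortion of $\langle q\rangle$ in $Q$ becomes visible through the $t$-ledger --- does not survive scrutiny. The ledger records a \emph{signed count of relators} used in rewriting, which is an abelianised quantity; there is no mechanism by which it detects compression of an arbitrary cyclic subgroup. If, say, the relators of $Q$ lie in the commutator subgroup of the free group, the ledger contributes nothing to the abelianisation of $A$ and the argument collapses. Your proposed ``normalisation'' is meant to rescue this, but you give no indication of what it is or why it exists for arbitrary $Q$. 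The torsion-freeness claim has the same defect: arranging that the extension class restrict to a generator of $H^2(C;\Z)$ for \emph{every} finite cyclic $C<Q$ is a global cohomological constraint that need not be satisfiable and certainly cannot be verified algorithmically without knowing the torsion of $Q$, which is undecidable. Your Dehn-function estimates are likewise only gestured at; the upper bound $n(\d_Q(n))^2$ for the amalgam, in particular, is not a standard fact and would itself require substantial argument.

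The paper's route avoids all of this by building $Q^\dagger$ out of a group that already has UQC. One applies the Rips construction to $Q$ to obtain $1\to N\to G\to Q\to 1$ with $G$ torsion-free hyperbolic and $N$ finitely generated, and then defines $Q^\dagger := (G\times G)\dot\ast_P$, the trivial HNN extension of $G\times G$ over the fibre product $P<G\times G$. Uniformly quasigeodesic cyclics is now automatic: $G$ has UQC because it is torsion-free hyperbolic, and Lemma~\ref{l:UQC} shows the property passes to direct products and to trivial HNN extensions. The Dehn-function bounds come from two results already established in the paper: Theorem~\ref{t:Pdist} gives ${\rm Dist}_P^{G\times G}(n)\simeq\d_Q(n)$, and the standard estimate for trivial HNN extensions (Theorem~\ref{hnn}) converts this into $n\,\d_Q(n)\preceq\d_{Q^\dagger}(n)\preceq n\,\d_{G\times G}(\d_Q(n))\simeq n\,(\d_Q(n))^2$, since $\d_{G\times G}(n)\simeq n^2$. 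The type-$\mathrm{F}_3$ assertion follows from the 1-2-3 Theorem (making $P$ finitely presented) and the fact that a trivial HNN extension of an $\mathrm{F}_3$ group over an $\mathrm{F}_2$ subgroup is $\mathrm{F}_3$; killing the stable letter and projecting $G\times G\to G\to Q$ gives the surjection. No delicate normalisation is required, because UQC is inherited from hyperbolicity rather than engineered by hand.
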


Theorem \ref{t:main} and  Proposition \ref{p:makeUQC}   
rely on the following calculation of {\em distortion} for fibre products,  variations of which
appear elsewhere in the literature. By definition,
$\disp (n) := \max\{|\gamma|_P \colon |\gamma|_{G\times G}\le n\}$.

\begin{restatable}{letterthm}{distort}\label{t:Pdist}  
Let $Q$ be a finitely presented group. If $G$ is hyperbolic and $P<G\times G$ is the fibre product of an epimorphism
$G\twoheadrightarrow Q$, then 
$\disp (n) \simeq \d_Q(n).$
\end{restatable}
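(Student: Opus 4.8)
The plan is to establish the two inequalities $\disp(n)\preceq\d_Q(n)$ and $\d_Q(n)\preceq\disp(n)$ separately, using the van Kampen diagram interpretation of the Dehn function of $Q$ on both sides.

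For the lower bound $\d_Q(n)\preceq\disp(n)$, I would start from a null-homotopic word $w$ over the generators of $Q$ with $|w|_Q\le n$ and $\Area_Q(w)=\d_Q(n)$. Lift $w$ to a word $\tilde w$ in $G$ representing an element of the kernel $N=\ker(G\onto Q)$; since $G$ is hyperbolic, $w$ has length $O(n)$ as an element of $N$ measured in $G$. The diagonal-type element $\gamma=(\tilde w,1)$ then lies in $P$ (as $\tilde w\in N$, and $N\times 1\le P$), and $|\gamma|_{G\times G}\simeq|\tilde w|_G\preceq n$. The point is that $|\gamma|_P=|(\tilde w,1)|_P$ is bounded below by the area of $w$: a word in the generators of $P$ of length $\ell$ expressing $(\tilde w,1)$ projects to an expression of $1\in G$ in the second coordinate, and the ``vertical'' generators of $P$ (those of the form $(n_j,1)$ with $n_j$ a normal generator of $N$) that appear, read in the second coordinate, record a van Kampen diagram for $w$ over the presentation of $Q$; hence $\ell\gtrsim\Area_Q(w)$. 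This gives $\disp(n)\succeq\d_Q(\lfloor cn\rfloor)\simeq\d_Q(n)$.

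For the upper bound $\disp(n)\preceq\d_Q(n)$, take $\gamma\in P$ with $|\gamma|_{G\times G}\le n$; write $\gamma=(a,b)$ with $a,b\in G$ of $G$-length $\le n$ and $p(a)=p(b)$ in $Q$. Then $ab^{-1}\in N$, and $w:=p(a)$ equals the empty word after applying the relations of $Q$; more precisely the word $\bar a\,\bar b^{-1}$ (in $Q$-generators, of length $\le 2n$) is null-homotopic, with a van Kampen diagram of area $\le\d_Q(2n)$. Such a diagram expresses $ab^{-1}$ as a product of at most $\d_Q(2n)$ conjugates (by $G$-elements of length $O(n)$, using hyperbolicity of $G$ to control the conjugators along the diagram) of the lifted relators $\tilde r_k\in N$. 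Each such conjugate $g\tilde r_k g^{-1}$, paired as $(g\tilde r_k g^{-1},1)$, is an element of $P$ of $P$-length $O(n)$ — here one uses that the normal-generator pairs $(\tilde r_k,1)$ together with the diagonal $\{(g,g)\}$ generate $P$ and that $|(g,g)|_P\le|g|_G$. Multiplying these together realizes $(ab^{-1},1)\in P$ with $|(ab^{-1},1)|_P\preceq n\cdot\d_Q(n)$ — but this is not quite sharp. To get the clean $\preceq\d_Q(n)$ one must be more economical: rather than conjugating each relator all the way back to the basepoint, one reads off a word spelling $(ab^{-1},1)$ directly by tracing the boundary of the van Kampen diagram and recording, via the ``pushing'' construction standard for fibre products, that the total $P$-length is the number of 2-cells plus $O(n)$, i.e. $\preceq\d_Q(n)+n$. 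Combined with $|\gamma|_P\le|(ab^{-1},1)|_P+|(b,b)|_P\preceq\d_Q(n)+n$, this yields $\disp(n)\preceq\d_Q(n)$.

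The main obstacle is the upper bound, specifically making the ``pushing'' argument genuinely linear in the number of 2-cells rather than naively quadratic: one needs that a van Kampen diagram for $\bar a\bar b^{-1}$ over $Q$ can be turned into a word over the generators of $P$ whose length is controlled by the \emph{area} and not by area-times-diameter. The device here is that $P$ is generated by $\{(x,x):x\in\text{gens}(G)\}\cup\{(r_k,1)\}$ and that a spanning-tree / combing argument on the dual of the diagram lets one insert and cancel ``diagonal'' syllables so each 2-cell contributes a bounded-length chunk; hyperbolicity of $G$ is what guarantees the required re-routings have length $O(n)$ without blowing up the count. I expect the rest — the lower bound, and the reductions $f\simeq g$ bookkeeping — to be routine given the diagrammatic setup, and the statement then follows by combining the two inequalities.
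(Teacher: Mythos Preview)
Your overall architecture is right, but you have misplaced the use of hyperbolicity, and this creates a genuine gap in the lower bound.

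\textbf{Lower bound.} Your sentence ``a word in the generators of $P$ of length $\ell$ expressing $(\tilde w,1)$ projects to an expression of $1\in G$ in the second coordinate, and the vertical generators \dots\ record a van Kampen diagram for $w$ over the presentation of $Q$; hence $\ell\gtrsim\Area_Q(w)$'' is the heart of the matter, and it is not justified. Reading first coordinates, a $P$-word $W$ of length $\ell$ gives a word $w_1\in F(X)$ with $w_1=\tilde w$ \emph{in $G$}, not in the free group. Pushing the diagonal letters left writes $w_1$ freely as $W_0$ times at most $\ell$ conjugates of letters $a\in A$; the second coordinate tells you $W_0=1$ \emph{in $G$}. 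To turn this into a free product of conjugates of relators of $Q=\langle X\mid R\cup A\rangle$ you must fill both $W_0$ and $\tilde w\, w_1^{-1}$ with $R$-relators, and the number of those is governed by $\d_G$. This is precisely where the paper invokes the linear isoperimetric inequality of $G$ (twice), obtaining $\Area_Q(\tilde w)\le Cn+\ell(2C+1)$. Without that step you only get $\d_Q(n)\preceq \d_G\circ\disp(n)$ (cf.\ Remark~\ref{r:upp}); your proposal never uses hyperbolicity here and so does not establish the claimed inequality. (Your aside ``since $G$ is hyperbolic, $w$ has length $O(n)$ as an element of $N$ measured in $G$'' is irrelevant: the lift of a length-$n$ word has length $n$ regardless.)

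\textbf{Upper bound.} Conversely, hyperbolicity plays no role in the bound $\disp(n)\preceq\d_Q(n)$; the paper notes this explicitly. Your diagnosis that the obstacle is ``area versus area-times-diameter'' is correct, and your proposed spanning-tree/combing argument on the diagram would work, but the clean device is the Bounded Noise Lemma (Lemma~\ref{l:BNL}): any minimal-area product $\prod\theta_i^{-1}r_i\theta_i$ can be chosen with noise $\sum|\theta_i\theta_{i+1}^{-1}|\le L\,\d_Q(n)+n$. After deleting the $R$-factors (which are trivial in $G$) and replacing each $\theta_i$ by its diagonal copy, this product becomes a $P$-word for $(\gamma,1)$ of length at most the noise plus the number of surviving factors, i.e.\ $(L+1)\d_Q(n)+n$. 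No re-routing inside $G$, and hence no hyperbolicity, is needed.

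In short: move your appeal to hyperbolicity from the upper bound (where it is unnecessary) to the lower bound (where it is essential, via the linear Dehn function of $G$), and replace the vague ``pushing'' by the Bounded Noise Lemma; with those two fixes your outline coincides with the paper's proof.
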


In Section \ref{s:last} we combine the dagger construction of Proposition \ref{p:makeUQC}  with 
Corollary \ref{c:forBR} and a template for 
designer groups described in \cite{mrb:icm}.
This enables us to provide the following tool for investigations into the range of behaviour 
that conjugator length functions of finitely presented groups can exhibit. 
A Dehn function $\d(n)$ is {\em standard}
if $\d(n)\simeq\d_Q(n)$ for some group $Q$ of type ${\rm{F}}_3$. All known
Dehn functions are standard; see  Remark \ref{r:standard}.

\begin{restatable}{letterthm}{final}\label{t:final}
For every standard Dehn function $\d(n)$, there exists a finitely presented
group $P$ with
$$
n\ \d(n) \preceq \CL_P(n) \preceq n^2 (\d(n^2))^2.
$$
\end{restatable}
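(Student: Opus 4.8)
The plan is to combine the two main tools the paper has assembled: the dagger construction of Proposition~\ref{p:makeUQC}, which upgrades an arbitrary finite presentation to one with uniformly quasigeodesic cyclics at a controlled cost to the Dehn function, and Corollary~\ref{c:forBR}, which sandwiches $\CL_P(n)$ between $\d_Q(n)$ and $\d_Q(n^2)$ whenever $Q$ has uniformly quasigeodesic cyclics. Given a standard Dehn function $\d(n)$, pick a group $Q$ of type $\mathrm{F}_3$ with $\d_Q(n)\simeq\d(n)$. Apply the dagger construction to obtain $Q^\dagger$: it still has type $\mathrm{F}_3$, it has uniformly quasigeodesic cyclics, and
$$
n\,\d(n)\preceq \d_{Q^\dagger}(n)\le n\,(\d(n))^2 .
$$
The final ingredient is the 1-2-3 Theorem from \cite{BBMS}, invoked as in the discussion after Corollary~\ref{c:forBR}: one needs an epimorphism $G\onto Q^\dagger$ from a torsion-free hyperbolic group whose kernel is finitely generated. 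Here is where the ``template for designer groups'' of \cite{mrb:icm} enters — it should let us realise $Q^\dagger$ (or a group quasi-isometric to it with the same Dehn function and still having uniformly quasigeodesic cyclics) as such a quotient, so that the associated fibre product $P<G\times G$ is finitely presented. This is the step I expect to be the main obstacle: one must verify that the designer-group template is compatible with the hypotheses of Proposition~\ref{p:makeUQC} and preserves both the $\mathrm{F}_3$ property and the finite generation of the hyperbolic kernel, simultaneously with retaining control on the Dehn function; threading all these conditions through one construction is the delicate part, though the paper has presumably arranged the $\dagger$-construction precisely so that it meshes with the template.

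Once $P$ is in hand, the estimate is a routine chain of $\preceq$'s. By Corollary~\ref{c:forBR} applied to $Q^\dagger$,
$$
\d_{Q^\dagger}(n)\preceq \CL_P(n)\preceq \d_{Q^\dagger}(n^2).
$$
For the lower bound, substitute $\d_{Q^\dagger}(n)\succeq n\,\d(n)$ to get $n\,\d(n)\preceq \CL_P(n)$. For the upper bound, substitute $\d_{Q^\dagger}(n^2)\le n^2\,(\d(n^2))^2$, which gives $\CL_P(n)\preceq n^2\,(\d(n^2))^2$. Combining the two displays yields
$$
n\,\d(n)\preceq \CL_P(n)\preceq n^2\,(\d(n^2))^2,
$$
which is exactly the claimed sandwich. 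One should be mildly careful that the manipulations $n\cdot\d(n^2)\preceq n^2\d(n^2)$ and similar monotonicity/superadditivity steps are legitimate for the function class in play — superadditive functions dominating $n$, which is the standing assumption on Dehn functions — but these are the usual bookkeeping lemmas of the subject and cause no real difficulty.

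A secondary point to check is that $\CL_P(n)$ rather than $\CL_P^{G\times G}(n)$ appears in the conclusion: Corollary~\ref{c:forBR} is already phrased in terms of $\CL_P(n)$, so no further translation between the intrinsic and extrinsic conjugator length functions is needed here — Theorem~\ref{t:main} and Theorem~\ref{t:Pdist} have already absorbed that translation, via the distortion estimate $\disp(n)\simeq\d_Q(n)$, into the proof of the corollary. Thus the only genuinely new work in Theorem~\ref{t:final} is the realisation step: producing a single finitely presented $P$ whose quotient simultaneously satisfies all the structural hypotheses. I would organise the section around first recalling the designer-group template, then performing the $\dagger$-construction on it, then reading off the 1-2-3 Theorem to get $P$ finitely presented, and finally quoting Corollary~\ref{c:forBR} to close the argument.
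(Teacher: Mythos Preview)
Your overall plan matches the paper's proof exactly: pick $Q$ of type ${\rm F}_3$ with $\d_Q\simeq\d$, apply Proposition~\ref{p:makeUQC} to get $Q^\dagger$, build the fibre product $P$ associated to a hyperbolic cover of $Q^\dagger$, invoke the 1-2-3 Theorem for finite presentability, and then chain Corollary~\ref{c:forBR} with the bounds $n\,\d(n)\preceq\d_{Q^\dagger}(n)\preceq n\,(\d(n))^2$.

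The one place where you go astray is the ``realisation step'', which you flag as the main obstacle. It is not an obstacle at all: the Rips construction (recalled at the start of Section~\ref{s:last}) takes \emph{any} finitely presented group $Q^\dagger$ and outputs a short exact sequence $1\to N\to H\to Q^\dagger\to 1$ with $H$ torsion-free hyperbolic and $N$ two-generated. No compatibility with the dagger construction needs to be checked, no quasi-isometric replacement of $Q^\dagger$ is needed, and nothing about the Dehn function or the UQC property of $Q^\dagger$ is at risk. The ``template for designer groups'' referenced from \cite{mrb:icm} \emph{is} precisely this package (Rips plus fibre product plus 1-2-3), not an additional ingredient to be layered on top. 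So your proposed organisation---template first, then dagger---is the wrong order; the paper applies dagger to $Q$ first and then Rips to $Q^\dagger$, exactly as in the first half of your proposal. Once you replace your worried paragraph with a one-line appeal to Rips, your argument is complete and identical to the paper's.
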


The group $P$ in Theorem \ref{t:final} will be a 
subgroup of $G\times G$ for some torsion-free hyperbolic group $G$.  Furthermore, $G\times G$ will be
virtually special in the
sense of Haglund and Wise \cite{HW}.  It follows that $P$ is residually finite
and can be embedded
${\rm{SL}}(d,\Z)$ and in the mapping class group of
a closed surface of some genus \cite{mrb:mrl}, where $d$ and the
genus depend on  $P$.

If $\d(n)\succeq 2^n$,  then,  up to $\simeq$ equivalence,  the difference between
the upper and lower bounds in Theorem \ref{t:final}
lies entirely with the fact that the argument of 
$\d$ in the upper bound is $n^2$ rather than $n$. This gap originates in
Corollary \ref{c:forBR}, so to close the gap one would have to identify reasonable
hypotheses on $Q$ that allow the  upper bound in that result to be improved.  This might lead to a proof that the
conjugator length functions of finitely presented groups 
are as diverse as Dehn functions,  a fact that has been established by different means in
recent work of Gillis and Wagner \cite{GW}.

\noindent{{\bf{Acknowledgements.}} 
I thank Chuck Miller for the many insights that I gained
from him during our long collaboration on fibre products and decision problems, and I thank Tim Riley for the many ideas
that he has shared during our collaboration on conjugator length functions.  It has been a joy to work with them both.
I am grateful to the University of Oxford for the sabbatical leave that enabled me to finish this work,
to Stanford University for hosting me during this sabbatical, and to the Lieb family for their boundless hospitality
during my time at Stanford.  
I also thank the referees of this article for their careful reading and helpful comments.

\section{The rel-cyclics Dehn function}\label{s:2}  
 
We are interested in fibre products $P<G\times G$ of maps $G\twoheadrightarrow Q$,
particularly features of $P$ that are related to aspects of the word problem in $Q$. 
In order to quantify these connections, we need to recall some  
standard facts and terminology concerning the geometry of the word problem
in finitely presented groups. The reader unfamiliar with this material may wish to consult
\cite{mrb:bfs} or \cite{BRSh}.

Let $\mathcal{P} \equiv \< X \mid R\>$ be a finite presentation of a group $\G$. By definition,
a word $w$ in the free group $F(X)$ represents the identity in $\G$ if and only if 
there is
an equality in $F(X)$ 
\begin{equation}\label{w:equ}
w = \prod_{i=1}^M \theta_i^{-1} r_i \theta_i
\end{equation}
with $r_i\in R\cup R^{-1}$ and $\theta_i\in F(X)$.  
The number $M$ of factors in the product on the righthand side is defined to be 
the {\em{area}} of the product. 
This terminology is motivated by the   correspondence with diagrams
that comes from van Kampen's Lemma \cite{BRS, mrb:bfs}.

One defines
${\rm{Area}}(w)$ to be the least area among all products of this form for $w$.
The {\em{Dehn function}} of $\mathcal{P}$ is the function $\delta: \N \to \N$ defined by
$$
\delta(n) = \max \{{\rm{Area}}(w) \mid w=_\G1 \ {\rm{and}} \ |w|\le n\}.
$$

The {\em noise} of the product  on the right of (\ref{w:equ}) is defined to be 
$\sum_{i=0}^{M} |\theta_{i}\theta_{i+1}^{-1}|$, with the convention that 
$\theta_0$ and $\theta_{M+1}$ are the empty word. 
An analysis of the standard proof of van Kampen's Lemma  
yields the {\em Bounded Noise Lemma}, an observation (and terminology) that is
due to the authors of \cite{BBMS}; see \cite{dison}, p.18
for an explicit proof.

\begin{lemma}[Bounded Noise Lemma]\label{l:BNL}
If ${\rm{Area}}(w) = M$, then there is a product of the form (\ref{w:equ})
with area $M$ and noise at most $ML + |w|$, where $L$ is the length of the longest relator in $R$.
\end{lemma}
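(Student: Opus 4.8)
The plan is to prove the Bounded Noise Lemma by examining the standard ``disc-filling'' argument that underlies van Kampen's Lemma, keeping track of the base-point paths $\theta_i$ as one reads around the boundary of a minimal van Kampen diagram. So first I would fix a word $w$ with $\mathrm{Area}(w)=M$ and take a van Kampen diagram $D$ over $\mathcal P$ for $w$ with exactly $M$ two-cells. Recall that from $D$ one extracts an equality of the form (\ref{w:equ}) by choosing, for each two-cell $c_i$ of $D$, a path $p_i$ in the $1$-skeleton of $D$ from the base-point of $D$ to a chosen vertex on $\partial c_i$; the label of $p_i$ is $\theta_i$ and the label of $\partial c_i$ read from that vertex is $r_i$. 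The key point is that we have freedom in how we order the two-cells and how we route the paths $p_i$, and a careful choice makes the noise small.

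The main step is the routing argument: I would choose a spanning tree $T$ in the $1$-skeleton of $D$, route every $p_i$ along $T$, and order the two-cells so that consecutive cells $c_i, c_{i+1}$ are ``adjacent'' in a depth-first traversal of the dual structure of $D$. With such a choice, each quantity $|\theta_i\theta_{i+1}^{-1}|$ is bounded by the length of the portion of $T$ that the traversal backtracks over between visiting $c_i$ and $c_{i+1}$, plus a bounded contribution from walking around the boundaries of the cells themselves. Summing over $i$, the total backtracking along $T$ is at most (twice) the number of edges of $T$, and each edge of $T$ is either a boundary edge of $D$ — contributing at most $|w|$ in total — or an interior edge, and every interior edge lies on the boundary of some two-cell, of which there are $M$, each with boundary length at most $L$. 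This yields noise at most $ML+|w|$ up to the constants absorbed in the statement; one checks the bookkeeping gives exactly $ML+|w|$ in the cleanest formulation, or one simply cites \cite{dison}, p.~18, for the explicit constant.

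I expect the main obstacle to be the bookkeeping in the traversal argument: making precise the claim that a depth-first ordering of the two-cells, combined with tree-routed base-paths, produces an equality of the form (\ref{w:equ}) whose noise telescopes correctly. In particular one must handle diagrams that are not simply connected in a naive sense (e.g.\ diagrams with ``spikes'' or cut vertices) and ensure the same path $\theta$ is used when leaving and returning to a shared vertex, so that the intermediate contributions genuinely cancel. Since this is a known lemma with a published proof, I would not reproduce the full combinatorial argument here; instead I would state the routing/ordering scheme, indicate why the telescoping works, and refer the reader to \cite{BBMS} and \cite{dison} for the details, as the excerpt already does.
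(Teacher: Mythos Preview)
The paper does not prove this lemma at all: it merely attributes the observation to \cite{BBMS} and refers the reader to \cite{dison}, p.~18, for an explicit proof. Your plan to sketch the van Kampen diagram/tree-routing argument and then cite those same references is therefore entirely consistent with, indeed more generous than, the paper's own treatment.
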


\subsection{The rel-cyclics Dehn function}

The following functions will arise naturally in our study of 
fibre products.

\begin{definition}\label{d:d^c}
The {\em rel-cyclics} Dehn function of a finitely presented group $\G=\<X\mid R\>$ is 
$$ 
\d^{c}(n) : = \max_{w,u} \{ {\rm{Area}}(w\,u^{-p}) +|pn| \colon |w|+|u|\le n,\ w=_\G u^{p},\
|p|\le  o(u)/2\},
$$
where $o(u)\in\N\cup\{\infty\}$ is the order of $u$ in $\G$.  (The condition $|p|\le  o(u)/2$
is equivalent to requiring that $|p|$ is the least non-negative  integer such that  $w=u^{\pm p}$ in $\G$.) 
The {\em rel $\infty$-cyclics} Dehn function  will also enter our discussion,
$$ 
\d^z(n) : = \max_{w,u} \{ {\rm{Area}}(w\,u^{-p}) +|pn| \colon |w|+|u|\le n,\ w=_\G u^{p},\
o(u)=\infty\},
$$
as will the following variant of $\delta^c(n)$,
$$ 
\d^{o}(n) : = \max_{w,u} \{ {\rm{Area}}(w\,u^{-p}) +|pn| \colon |w|+|u|\le n,\ w=_\G u^{p},\
|p|\le  o(u)\}.
$$
Note the dependence of these functions on the  {\em return of cyclics} function of $\G$, 
which is defined as
$$
\m(n) := \max \{ p \colon \exists u\in\G,\ o(u)=\infty,\ |u|_\G\le n,\, |u^p|_\G\le n\}.
$$
$\d^{o}(n)$ is  also related to the {\em torsion-evolution} function of $\G$, which is
$$
{\mathfrak{t}}(n) := \max \{ o(u) \colon |u|_\G\le n,\, o(u)<\infty\}.
$$
\end{definition}

The Dehn function is a group invariant in the sense that the Dehn functions associated to different finite
presentations of a fixed group are $\simeq$ equivalent; 
see e.g.~\cite{mrb:bfs, BRSh}. A similar argument shows that $\d_\G^{c}(n)$ and its variants
are group invariants in the same sense.
This justifies the notation 
$\d_\G(n)$ and $\d_\G^{c}(n)$ in the statements of our theorems,
where only the $\simeq$ equivalence class is of concern. Likewise, it is
easy to verify that, up to $\simeq$ equivalence, $\m(n)$ and  ${\mathfrak{t}}(n)$ are independent of
the chosen finite generating set.  The $\simeq$ equivalence of $\d_\G^{c}(n)$ is also indifferent
to whether one quantifies over words $w,u$ with  $\max\{|w|,\, |u|\}\le n$ rather than $|w|+|u|\le n$.
 
We will often use a subscript on each of the  functions discussed above
to emphasize which group the function is attached to,  suppressing mention of
the finite presentation chosen.

\begin{remark}[{\em{On the definitions of $\delta^{c}(n)$ and $\delta^{o}(n)$}}]\label{r:2.3}
In the definition of $\delta^{o}(n)$,  by taking $w$ to be the empty word we see that
$\delta^{o}(n)\ge {\rm{Area}}(u^{o(u)}) + {\mathfrak{t}}(n)$ for each torsion element $u$ with $|u|\le n$. 
The more stringent condition $|p|\le o(u)/2$ in  $\delta^{c}(n)$
decouples $\delta^{c}(n)$ from ${\rm{Area}}(u^{o(u)})$ and removes the direct comparison with ${\mathfrak{t}}(n)$,
treating finite and infinite cyclic subgroups in a more equitable manner.   The fact that $\delta^{c}(n)$
does not control ${\mathfrak{t}}(n)$ is an essential feature of Theorem \ref{t:iff}, because there exist fibre products
$P<G_1\times G_2$ that satisfy the conditions of that theorem but are such that 
there is no algorithm to decide the orders of elements in $Q=G_1/(G_1\cap P)$: this
means that  $\delta_Q^{c}(n)$ is recursive  (i.e.~computable) but  ${\mathfrak{t}}_Q(n)$ is not -- see \cite{collins}
and \cite[Corollary B]{mb:iff}. 

\end{remark} 
 
 \begin{remark}[{\em{On the geometric content of $\delta^{c}(n)$}}] The Dehn function of a finitely
 presented group $\G$ constrains the geometry of 
 discs in any closed Riemannian manifold $M$ with fundamental group $\G$: up to $\simeq$ equivalence,  $\delta_\G(n)$
 gives the optimal upper bound on the area of minimal-area discs filling loops of length at most $n$ in the universal
 cover of $M$; this is Gromov's Filling Theorem -- see \cite{mrb:bfs} for a detailed proof. 
 
 The rel-cyclics Dehn function $\delta_\G^{c}(n)$ admits a similar interpretation,  but the set of loops that we are now filling
has been enlarged to account for the distortion of cyclic subgroups in $\G$:  whereas $\delta_\G(n)$ provides a measure of how difficult it is to shrink null-homotopic loops of length at most $n$ to a point in $M$,  the rel-cyclics Dehn function $\delta_\G^{c}(n)$ can be viewed as an estimate of how difficult it is to take a loop $w$ 
that is equal in $\pi_1M$ to a power, $p$ say,  of a  loop $u$ and homotop $w$ to the path that traces $p$ times around 
$u$; we quantify in terms of the sum of the lengths of $w$ and $u$.  The measurement that we use to quantify difficultly
  is the area of the homotopy,  which is viewed as a map of a disc to the universal covering $\widetilde{M}$; see figure \ref{fig1}. If $\<u\>$ is highly distorted in $\G=\pi_1M$,  then the length of $u^p$ can be much larger than the length of $w$ and $u$, 
so $\delta_\G^{c}(n)$ depends not only on the geometry of discs in $\widetilde{M}$ but also on the geometry of 
cyclic subgroups in $\G=\pi_1M$.  The additional summand $|pn|$ in the definition of $\delta_\G^{c}(n)$ is needed
for robustness: it accounts for the additional area that the disc might acquire near the boundary when $u$ is perturbed.
\end{remark} 

\begin{figure}[ht]
        \begin{center}
         \includegraphics[width=7.0cm]{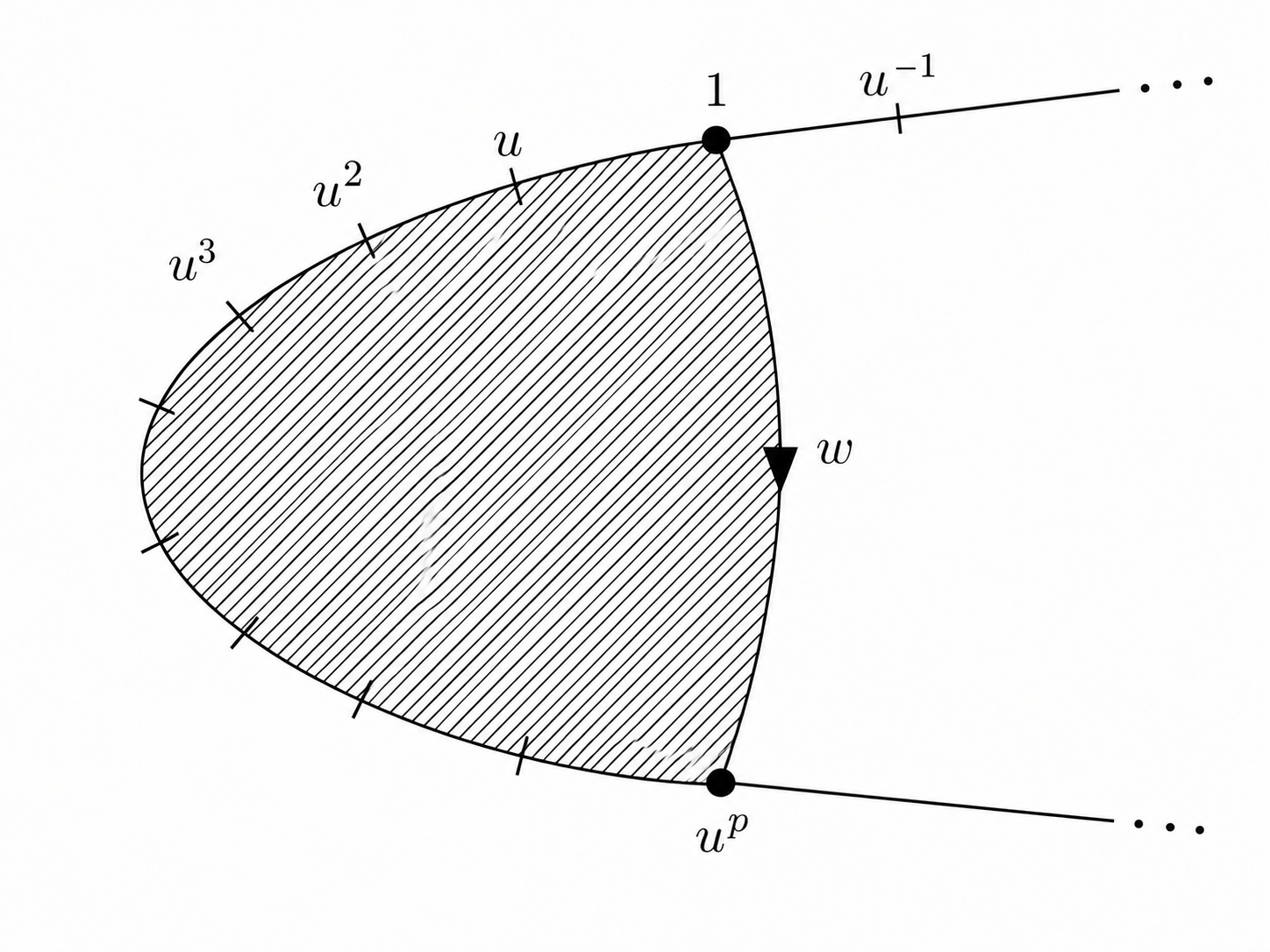}  
                      \end{center}
        \caption{The geometric interpretation of $\d^{c}(n) $}\label{fig1}
\end{figure} 

\begin{remark}[{\em{Standard Dehn functions}}]\label{r:standard}
We say that a Dehn function is {\em standard} if it is the Dehn function of a
group of type ${\rm{F}}_3$. It is unknown whether all Dehn functions
are standard, up to $\simeq$ equivalence, but all of those in the literature 
are.

In \cite{BRSa}, Birget, Rips and Sapir went a long way towards
characterising the functions
which are Dehn functions of finitely presented groups. Their
criterion for being a Dehn function
is expressed in terms of 
time functions of Turing machine, with a super-additivity condition
and a technicality 
related to the ${\bf{P}}$ versus ${\bf{NP}}$ problem that only affects sub-exponential
Dehn functions.
This criterion covers all of the known Dehn functions 
$\d(n)\succeq n^4$.
In addition, the groups constructed in \cite{BRS} are all fundamental
groups of finite aspherical complexes, as are the groups in \cite{BB}, so 
their Dehn functions $\d(n)$ are standard.
\end{remark}

\section{Distortion of Fibre Products}\label{s:3}

We fix an exact sequence
 $1\to N\to G\overset{f}\to Q\to 1$ 
and consider the fibre product $P<G\times G$. Although we shall not mention it explicitly,
on several occasions we will exploit the observation that
$P = (N\times 1)\rtimes \Delta$, where $\Delta\cong G$ is the diagonal in $G\times G$.
This semidirect product decomposition is obtained by splitting the projection of $P$ onto $1\times G < G\times G$.

If $G$ and $Q$
are finitely presented, then there is  
a finite subset $A\subset N$ that generates $N$ as a normal subgroup.
We choose a finite presentation $G=\< X\mid R\>$ with
$A\subset X$, and we
present $Q$ as $Q=\< X\mid R\cup A\>$, so 
the identity map on $X$ induces $f:G\to Q$. 
The following well-known observation is easily verified. %(cf.~\cite{fritz}).

\begin{lemma}\label{l:3.1}
$P$ is generated by $\{(a,1), (x,x)\mid a\in A,\, x\in X\}$.
\end{lemma}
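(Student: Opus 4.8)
The plan is to prove the two inclusions between $P$ and the subgroup $H\le G\times G$ generated by $S:=\{(a,1)\mid a\in A\}\cup\{(x,x)\mid x\in X\}$. The inclusion $H\le P$ is immediate: each $(x,x)$ lies in $P$ since its two coordinates have equal image under $f$, and each $(a,1)$ lies in $P$ because $a\in A\subset N=\ker f$, so $f(a)=1=f(1)$.

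For the reverse inclusion, I would take an arbitrary $(g_1,g_2)\in P$. Since $f(g_1)=f(g_2)$, the element $n:=g_1g_2^{-1}$ lies in $N$, and we may factor $(g_1,g_2)=(n,1)\,(g_2,g_2)$. The diagonal factor $(g_2,g_2)$ lies in $H$ because $x\mapsto(x,x)$ extends to a homomorphism $G\to\Delta\le H$, so $(g_2,g_2)$ is the image of any word in $X^{\pm1}$ representing $g_2$. It then remains to place $(n,1)$ in $H$. Using that $A$ generates $N$ as a normal subgroup of $G$, write $n=\prod_{i=1}^{k} u_i\,a_i^{\varepsilon_i}\,u_i^{-1}$ with $u_i\in G$, $a_i\in A$ and $\varepsilon_i\in\{\pm1\}$. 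The key observation is that conjugation by diagonal elements kills the second coordinate:
$$(u,u)\,(a,1)\,(u,u)^{-1}=(uau^{-1},\,1)\qquad(u\in G,\ a\in A).$$
Hence $(n,1)=\prod_{i=1}^{k}(u_i,u_i)\,(a_i,1)^{\varepsilon_i}\,(u_i,u_i)^{-1}$ exhibits $(n,1)$ as a product of $H$-conjugates of elements of $S$ and their inverses, so $(n,1)\in H$ and therefore $(g_1,g_2)=(n,1)(g_2,g_2)\in H$.

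I do not expect any genuine obstacle here; the content is simply the semidirect-product description $P=(N\times 1)\rtimes\Delta$ recorded above, with the one point worth isolating being that the diagonal subgroup $\Delta=\langle(x,x)\mid x\in X\rangle$ supplies exactly the conjugators needed to convert a normal-closure expression for $n\in N$ into a word over $S$ inside $H$.
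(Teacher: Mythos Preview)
Your proof is correct and is precisely the argument the paper has in mind: the paper does not spell out a proof but merely says the lemma ``is easily verified,'' having just noted the decomposition $P=(N\times 1)\rtimes\Delta$, which is exactly the structure you exploit. Your write-up is a faithful unpacking of that observation.
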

  
\def\dis{{\rm{Dist}}_P^{G\times G}} 

The  bound $\dis (n) \preceq \d_Q(n)$ in the following theorem remains valid
when $G$ is not hyperbolic,  and Remark \ref{r:upp} explains what one can say about the reverse inequality. 

\distort*
 
\begin{proof} 
Working with the generators $\{(1,x), (x,1)\mid
x\in X\}$ for $G\times G$ and
$\{(a,1), (x,x)\mid a\in A,\, x\in X\}$ for $P$,
we have $|(\gamma_1,\gamma_2)|_{G\times G}
= |\gamma_1|_G + |\gamma_2|_G$ and  $|(\gamma, \gamma)|_P= |\gamma|_G$. 
To see this last equality, note that if one deletes the generators $(a,1)$ from a word $w$ 
representing $(\gamma,\gamma)\in P$, then the projection to $1\times G$ still represents $\gamma$,
and therefore the redacted word still represents $(\gamma,\gamma)$. Thus only the diagonal 
generators $(x,x)$ occur in geodesic words for $(\gamma,\gamma)$.

Suppose $(\gamma_1,\gamma_2)\in P$ and let $\gamma = \gamma_2^{-1}\gamma_1 \in N$.
From the triangle inequality in $G$, 
% \begin{align*}\label{e:GxG} 
 %|(\gamma,1)|_{G\times G} =&
$$|\gamma|_G\le |\gamma_1|_G + |\gamma_2|_G = |(\gamma_1, \gamma_2)|_{G\times G}.$$
% =&  \le |(\gamma_1,\gamma_2)|_{G\times G} + |(\gamma_2,\gamma_2)|_{G\times G}\\
%=& |\gamma_1|_G + 3\, |\gamma_2|_G \le 3\,|(\gamma_1, \gamma_2)|_{G\times G}.
%\end{align*}	 
And from the triangle inequality  in $P$,  noting that $(\gamma,1)= (\gamma_2,\gamma_2)^{-1}(\gamma_1,\gamma_2)$,  
$$
\big{|}|(\gamma, 1)|_P - 
|(\gamma_1,\gamma_2)|_P\big{|}  \le 
|(\gamma_2,\gamma_2)|_P  = |\gamma_2|_G \le  |\gamma_1|_G + |\gamma_2|_G =  |(\gamma_1, \gamma_2)|_{G\times G}.
$$
Thus, if $|(\gamma_1, \gamma_2)|_{G\times G}\le n$, then $|\gamma|_G\le n$ and the difference between $|(\gamma,1)|_P$
and $|(\gamma_1,\gamma_2)|_P$ is at most $n$. Therefore it is enough to establish upper and lower bounds on
$|(\gamma, 1)|_P$ for  $\gamma\in N$ with $|\g|_G\le n$.

Let $w$ be a word of length at most $n$ in the generators $X^{\pm 1}$ with $w=\gamma\in N$.
As  $w=1$ in $Q=\< X\mid R\cup A\>$, there is an equality
in the free group $F(X)$,
\begin{equation}\label{e2}
w = \prod_{i=1}^M \theta_i^{-1} r_i \theta_i,
\end{equation}
with $r_i\in (R\cup A)^{-1}$ and $M\le \d_Q(n)$. By Lemma \ref{l:BNL},
we may assume that the noise 
$\sum_i |\theta_i \theta_{i+1}^{-1}|$ 
is at most $n+\d_Q(n)L$, where $L$ is the length of the longest relator in $R$. 
If $r_i\in R^{\pm 1}$, then $r_i=1$ in $G$. So 
by deleting  each factor with $r_i\in R^{\pm 1}$ from the product 
and reindexing we get an
equality in $G$ of  the form
\begin{equation}\label{e22}
w = \prod_{j=1}^{M'} \theta_j^{-1} a_j \theta_j,
\end{equation}
with $a_j\in A^{\pm 1}$ and $M'\le M$. The noise of the product in (\ref{e22}), which we still regard as an element 
of the free group,  is no greater
than that of (\ref{e2}) because if the
factor $\theta_i^{-1} r_i \theta_i$ is deleted, then  $|\theta_{i-1}
\theta_{i}^{-1}| + |\theta_i
\theta_{i+1}^{-1}|$ is subtracted from the noise while
$|\theta_{i-1}\theta_{i+1}^{-1}|$ is added, and by the triangle
inequality 
$$
|\theta_{i-1}
\theta_{i}^{-1}| + |\theta_i
\theta_{i+1}^{-1}|\ge |\theta_{i-1}\theta_{i+1}^{-1}|.
$$ 

We can regard (\ref{e22}) as an equality
in $G\times 1$ by replacing each symbol $x$ with $(x,1)$.
And since $(x,x)^{-1} (a,1) (x,x) = (x,1)^{-1}(a,1) (x,1)$ for all $x\in X$ and
$a\in A$, the equality remains valid in $G\times G$ if we subsequently replace
each symbol $(x,1)$ in each of the $\theta_j$ by $(x,x)$ while replacing $a_j$ with $(a_j,1)$.
Writing $\Theta_j$ for the word derived from $\theta_j$ in this way, we
obtain an equality in $P<G\times G$ where the righthand side is a product of conjugates of generators:
$$
(\gamma,1) = (w,1) = \prod_{j=1}^{M'} \Theta_j^{-1} (a_j,1) \Theta_j,
$$
where $M'\le M \le \d_Q(n)$. Moreover, since each $|\Theta_j\Theta_{j+1}^{-1}| = |\theta_j
\theta_{j+1}^{-1}|$, the noise $\sum_j |\Theta_j\Theta_{j+1}^{-1}|$
is no greater than the noise of (\ref{e2}), which is at most $L\d_Q(n) +n$.
Thus, recalling the convention that $\theta_0 $ and $\theta_{M'+1}$
are empty words, we get 
\begin{equation}\label{e-new}
|(\gamma, 1)|_P \le \sum_{j=0}^{M'} (|\theta_j\theta_{j+1}^{-1}| + 1) \le (L+1)\d_Q(n) +n.
\end{equation} 
This finishes the proof that $\dis (n)\preceq \d_Q(n)$.
\def\om{\omega}
\def\omn{\omega_n} 

Towards proving that $\d_Q(n)\preceq \dis (n)$, 
consider a word $\omn\in F(X)$ of length at most $n$ 
such that $\omn=1$ in $Q$ and
$\Area(\omn) = \d_Q(n)$. Note that $\omn\in N<G$.
Let $W$ be a word of length $\ell = |(\omn, 1)|_P$
in the generators $A_0=\{(a,1)\mid a\in A\}$ and $X_\Delta = \{
(x,x)\mid x\in X\}$ that equals $(\omn, 1)$ in $P$. 
We move all occurrences of each $(x,x)$ to the left in $W$ to obtain
an equality {\em in the free group} $F(A_0\cup X_\Delta)$ 
\begin{equation}\label{e3}
W =  W_0 \prod_{i=1}^{\ell'} u_i^{-1} \a_i u_i
\end{equation}
where $W_0$ is a word of length at most $\ell$ in the symbols $(x,x)$,
each $\a_i\in A_0^{\pm 1}$ occurred in the unedited word $W$, and $\ell'\le \ell$.
Now $W=(\omn,1)$ in $G\times G$, so by reading the second coordinate we
see that $W_0=1$ in the diagonal subgroup of $G\times G$. As $G$ is hyperbolic, 
there is a {\em free} equality
$$
W_0 = \prod_{j=1}^{\ell''} v_j^{-1} r_j^\Delta v_j
$$
with $r_j^\Delta\in R_\Delta^{\pm 1}\subset F(X_\Delta)$
and $\ell''\le C\ell'$, where $C$ the isoperimetric constant for $G$.
 By substituting this expression into (\ref{e3}) and reading
the first coordinate of each symbol, we obtain an equality in the free group $F(X)$
\begin{equation}\label{eq4}
w_1 = \prod_{j=1}^{\ell''} (v_j^{-1} r_j v_j) \ \prod_{i=1}^{\ell'} (u_i^{-1} a_i u_i)
\end{equation}
with $r_i\in R^{\pm 1}$ and $a_i\in A$, where $w_1$ is the word
of length $\ell$ that one obtains
by reading the first coordinate in $W$,  transcribing 
$(a,1)$  to $a$
and $(x,x)$ to $x$. 

By definition, $\omn=w_1$ in $G$, so using the linear isoperimetric inequality of
$G$ again, we have an equality in $F(X)$:
$$
\omn = w_1 \prod_{k=1}^B \mu_k^{-1} \rho_k \mu_k,
$$
with $B< C(n + \ell)$ and $\rho_k\in R^{\pm 1}$. By combining this with 
(\ref{eq4}), we get an equality expressing $\omn$ in the free group $F(X)$
as a product of $B + \ell'' + \ell' $ conjugates of
elements of $(R\cup A)^{\pm 1}$.  Recalling that $\ell'\le \ell$ and $\ell''\le C\ell'$, we deduce
 $\d_Q(n) = \Area(\omn) \le Cn+ \ell (2C + 1)$,
whence 
\begin{equation}\label{e:omn}
|(\omn, 1)|_P = \ell \ge \frac{1}{2C+1}(\d_Q(n) -Cn),
\end{equation}  
as required.
\end{proof}

\begin{remark}\label{r:upp}
 In the preceding proof, our only use of the hypothesis that $G$ was hyperbolic was our invocation of the linear isoperimetric inequality, first to get $\ell''\le C\ell'$ and then to produce the constant $B$.  For arbitrary
finitely presented groups $G$, one can instead  estimate using $\d_G(n+\ell)$,
leading to an inequality of the form $\d_Q(n)\preceq \d_G\circ\dis(n)$. 
\end{remark} 

Near  the beginning of the preceding proof, we made the following observation, which will be needed again later.

\begin{scholium}\label{l:Ndoes}
There is a sequence of elements $\gamma_n\in N$ with $|\gamma_n|_G\le n$
such that $|(\gamma_n, 1)|_P\ge \dis (n) - n$.
\end{scholium}

We shall also need the following summary of what we actually proved in the argument 
establishing the bound  $\dis (n)\preceq \d_Q(n)$.

\begin{scholium}\label{scholiumNew} If $(\gamma_1,\gamma_2)\in P$ and $|(\gamma_1,\gamma_2)|_{G\times G}\le n$,
then $|\g_2^{-1}\g_1|_G\le n$ and for any word  $w\in F(X)$ representing $\gamma_2^{-1}\gamma_1$ in $G$,  
$$
|(\gamma_1,\gamma_2)|_P \le |(\gamma_2^{-1}\gamma_1, 1)|_P + n \le (L+1) \Area_Q(w) + |w| + n.
$$
\end{scholium} 

\section{The geometry of cyclic subgroups} 

In this section we explore three geometric properties of cyclic subgroups of
finitely generated groups. The first property, having {\em uniformly quasigeodesic
cyclics}, is enjoyed by a wide range of groups, while the second property,  
{\em uniform monotonicity of cyclic subgroups}, is something that we establish only
for direct products of torsion-free hyperbolic groups. The third property,
having {\em{uniformly proper cyclics}},  is less stringent; it is of interest to us largely
because its relationship to the computability of the strengthened rel-cyclics
Dehn function $\delta^{o}(n)$ and the membership problem for cyclic sub-semigroups 
parallels the relationship between  
$\delta^c(n)$ and the membership problem for cyclic subgroups -- see Proposition \ref{p:semigps}.
   
\subsection{Uniformly quasigeodesic cyclics}\label{s:UQC}

In the following definition, the value of the constant $\lambda$ depends
on the choice of finite generating set for $G$ but its existence does not.

\begin{defn}\label{d:UQC}
We say that a finitely generated group $G$ has {\em uniformly quasigeodesic cyclics} (UQC)
if  there is a constant $\lambda>0$ such that $|g^n|_G\ge\lambda |n|$ for all
integers $n$ and all  $g\in G\ssm\{1\}$. 
\end{defn}

The function $n\mapsto |g^n|_G$ is sub-additive, so the limit of $|g^n|_G/n$ as $n\to\infty$
exists; it is called the {\em{algebraic translation number}} of $g\in G$,  
$$\tau(g) := \lim_n 
\frac 1 n |g^n|_G = \inf_{n\ge 1} \frac 1 n |g^n|_G.$$ 
This observation allows us to rephrase the (UQC) condition as follows.

\begin{lemma}\label{l:tau} A finitely generated group
$G$ has uniformly quasigeodesic cyclics (UQC) if and only if $\{\tau(g)\mid g\in G\ssm\{1\}\}$ is bounded away from $0$.
\end{lemma}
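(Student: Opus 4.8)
The plan is to prove the two implications of the equivalence separately; the only substantive ingredient, the inequality $\tau(g)\le |g^n|_G/n$ (valid for every $g\in G$ and every integer $n\ge 1$), has already been established in the paragraph preceding the lemma, so both directions are short. For the implication that UQC implies $\{\tau(g)\mid g\in G\ssm\{1\}\}$ is bounded away from $0$: assuming $|g^n|_G\ge\lambda|n|$ for all $n\in\Z$ and all $g\in G\ssm\{1\}$, I would fix $g\ne 1$ and note that for every integer $n\ge 1$ we have $|g^n|_G/n\ge\lambda$; letting $n\to\infty$ gives $\tau(g)\ge\lambda$. Since $\lambda>0$ is independent of $g$, the translation numbers of non-trivial elements are bounded below by $\lambda$.

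Conversely, suppose there is $\mu>0$ with $\tau(g)\ge\mu$ for every $g\in G\ssm\{1\}$. For such a $g$ and any integer $n\ge 1$, the inequality $\tau(g)\le |g^n|_G/n$ yields $|g^n|_G\ge n\,\tau(g)\ge\mu n$. Since the word metric is symmetric, $|g^n|_G=|g^{-n}|_G$, so in fact $|g^n|_G\ge\mu|n|$ for every integer $n$ (the case $n=0$ being trivial), and hence $G$ has UQC with constant $\lambda=\mu$.

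I do not anticipate a genuine obstacle here. The one point worth flagging is that both conditions in the lemma are implicitly statements about infinite-order elements: a torsion element $g$ of order $k$ satisfies $|g^k|_G=0$, which is incompatible with any UQC bound and forces $\tau(g)=0$, so either hypothesis already entails that $G$ is torsion-free. Once this is observed, $\tau(g)=\lim_n |g^n|_G/n$ is a meaningful positive invariant on every non-trivial element, and the two displays above give the equivalence verbatim.
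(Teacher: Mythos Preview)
Your proof is correct and follows exactly the approach indicated in the paper, which simply notes that the lemma follows easily from the inequality $\tau(g)\le |g^n|_G/n$ established just before it. You have spelled out both directions precisely as intended, and your remark about torsion is apt though not strictly needed for the argument.
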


Torsion-free hyperbolic groups have uniformly quasigeodesic cyclics
-- see \cite{BH}, pp.~464--466. So too do
groups that act
freely, properly and cocompactly by isometries on {\rm{CAT}}$(0)$ spaces:
this follows from the \v{S}varc-Milnor Lemma and Proposition II.6.10(3) of \cite{BH}.
Other groups with UQC include the torsion-free subgroups
of automorphism groups of free groups \cite{alibegovic},  mapping class groups \cite{bow}, and 
hierarchically hyperbolic groups \cite{ahpz}.

\begin{lemma}\label{l:UQC} Let $G$ and $H$ be finitely generated groups and let 
$M<G$ be a finitely generated subgroup. 
If $G$ and $H$ have uniformly quasigeodesic cyclics, then so do $G\times H$ and $M$ and $G\dot\ast_M=(G,t \mid [t,m]=1\ \forall m\in M)$.
\end{lemma}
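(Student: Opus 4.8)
The plan is to verify the UQC property for each of the three constructions by working directly with the definition: in each case we produce a single constant $\lambda>0$ that works for all nontrivial elements. For $G\times H$, given $(g,h)\neq(1,1)$ and choosing generators for $G\times H$ to be the union of generators for $G$ and $H$, we have $|(g,h)^n|_{G\times H} = |g^n|_G + |h^n|_H$, and since at least one of $g,h$ is nontrivial, applying the UQC constant $\lambda_G$ (resp.\ $\lambda_H$) to that coordinate already gives $|(g,h)^n|_{G\times H}\ge \min\{\lambda_G,\lambda_H\}\,|n|$. The case $m=G\times G$ with $\lambda_G=\lambda_H$ is the special case needed for the paper's applications, but the general statement costs nothing extra. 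This first part is essentially immediate.

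For the subgroup $M<G$, the subtle point is that the metric on $M$ induced by a finite generating set of $M$ is in general much larger than the restriction of the metric on $G$ --- $M$ could be exponentially distorted. But UQC goes in the favourable direction: if $m\in M\ssm\{1\}$ then for all $n$ we have $|m^n|_M \ge |m^n|_G \ge \lambda_G|n|$, because each generator of $M$, being an element of $G$, has $G$-length at least $1$. So $\lambda_M=\lambda_G$ works. This is the cleanest of the three and the key observation is just the one-line inequality $|\cdot|_M\ge|\cdot|_G$ on $M$.

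The third case, $G\dot\ast_M = \langle G, t\mid [t,m]=1\ \forall m\in M\rangle$, is the one I expect to require real work, and I would prove it via Lemma~\ref{l:tau}, i.e.\ by showing the algebraic translation numbers $\tau(g)$ are bounded away from $0$. The group $G\dot\ast_M$ is an HNN extension of $G$ with associated subgroups $M=M$ and stable letter $t$ acting as the identity; equivalently it is the amalgam $G *_M (M\times\langle t\rangle)$. I would invoke Bass--Serre theory: $G\dot\ast_M$ acts on the Bass--Serre tree $T$ with vertex stabilisers conjugates of $G$ and of $M\times\langle t\rangle$, and edge stabilisers conjugates of $M$. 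A nontrivial element $\gamma$ is either \emph{elliptic} (conjugate into $G$ or into $M\times\langle t\rangle$) or \emph{hyperbolic} on $T$ (with positive translation length, hence $|\gamma^n|\to\infty$ at least linearly in $n$ by a ping-pong/normal-form argument, giving $\tau(\gamma)$ bounded below by a constant depending only on the generating set). For elliptic elements conjugate into $G$, UQC of $G$ gives the bound on $\tau$, using that translation number is a conjugacy invariant that does not increase under the inclusion $G\hookrightarrow G\dot\ast_M$ (indeed a word for $\gamma^n$ over the generators of $G\dot\ast_M$ projects, by Britton's lemma / normal form considerations, to something no shorter in $G$ when $\gamma\in G$). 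For elements conjugate into $M\times\langle t\rangle$, write such an element as $m t^k$; if $k\neq 0$ this element has infinite order and I would bound its translation number below using the retraction $G\dot\ast_M\to\langle t\rangle$ sending $G\to 1$, $t\mapsto t$, under which $mt^k\mapsto t^k$, so $|{(mt^k)}^n|\ge |t^{kn}|_{\langle t\rangle}=|kn|\ge |n|$; if $k=0$ then $m\in M\ssm\{1\}$ and we are back in the subgroup case. The main obstacle is making the Bass--Serre/normal-form estimates uniform, i.e.\ extracting a single $\lambda$ that does not depend on $\gamma$; the standard device is that the translation length on the tree is a nonnegative integer, so a hyperbolic element has tree-translation length $\ge 1$, and one then relates tree-displacement to word-length in $G\dot\ast_M$ by a Lipschitz comparison coming from the chosen finite generating set. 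Packaging all of this, one obtains $\inf\{\tau(\gamma):\gamma\in G\dot\ast_M\ssm\{1\}\}>0$, and Lemma~\ref{l:tau} concludes.
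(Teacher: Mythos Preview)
Your treatment of $G\times H$ and $M$ matches the paper's exactly. For $G\dot\ast_M$ your argument is correct in substance and close in spirit to the paper's, but the packaging differs: the paper works directly with the HNN structure rather than passing to the amalgam $G*_M(M\times\langle t\rangle)$ and the Bass--Serre tree. It splits elements into those conjugate into $G$ and those that are not; for the latter it passes to a cyclically reduced HNN normal form and invokes Britton's Lemma to see that powers accumulate at least one $t^{\pm 1}$ per factor, giving $\tau\ge 1$. This single case absorbs both your ``hyperbolic on the tree'' case and your ``$mt^k$ with $k\neq 0$'' case, so the paper's argument is shorter. Your Bass--Serre framing is perfectly valid and the Lipschitz comparison between the Cayley graph and the tree does give $\tau(\gamma)\ge 1$ for hyperbolic $\gamma$, uniformly.

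One point needs repair. For elements conjugate into $G$ you write that the translation number ``does not increase under the inclusion $G\hookrightarrow G\dot\ast_M$'', but that inequality ($\tau_{G\dot\ast_M}\le\tau_G$) is the trivial, useless direction. What you need is $\tau_{G\dot\ast_M}(g)\ge\tau_G(g)$, and this comes from the retraction $G\dot\ast_M\to G$ killing $t$, which is $1$-Lipschitz and fixes $G$ pointwise; hence the inclusion $G\hookrightarrow G\dot\ast_M$ is an isometric embedding (this is exactly what the paper records), so $\tau$ agrees on $G$. Your parenthetical about projecting words gestures at this retraction but the phrase ``no shorter'' is backwards, and Britton's Lemma is not needed here --- the retraction is just the homomorphism $t\mapsto 1$. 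Once this is fixed, your proof is complete.
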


\begin{proof}
For suitable choices of word metrics, $|\mu|_G \le |\mu|_M$ and $|(g,h)|_{G\times H} = |g|_G + |h|_H$ for all $g\in G, h\in H$ and $\mu\in M$, so the only non-trivial assertion is
that $G\dot\ast_M$ has UQC if $G$ does.

We fix a finite generating set for $G$ and add the stable letter $t$ to 
get a generating set for $G\dot\ast_M$.  Note that the inclusion $G\hookrightarrow
G\dot\ast_M$ is an isometric embedding for the corresponding
word metrics.  

If a  non-trivial element $\gamma\in G\dot\ast_M$
is not  conjugate into $G$, 
then
it is conjugate to an element $\g_0$ represented by a word in cyclically reduced HNN form
$g_0 t^{e_0} \cdots g_lt^{e_l}$, with each exponent $e_i$   non-trivial and
$g_i\in G\ssm M$ or else $l=0$ and $g_0\in M$.
If $l=0$ then $(g_0t^{e_0})^n = g_0^nt^{ne_0}$. 
In the case $l>0$, for every integer $n>0$, the expression 
$(g_0 t^{e_0}\cdots g_{l}t^{e_l})^n$
is still in reduced HNN form. Regardless, by Britton's Lemma, any reduced word representing 
$\g_0^n$ will contain $n$ occurrences of $t^{\pm 1}$ for each occurrence in the reduced form of $\gamma_0$.
Thus $|\g_0^n|_{G\dot\ast_M}\ge  n$, whence $\tau(\g)=\tau(\g_0)\ge 1$.

If $\gamma$ is conjugate to $g\in G$, then $\tau(\gamma)=\tau(g)$
can be calculated in $G$ as well as $G\dot\ast_M$.   
Thus  
$$\inf\{\tau(\gamma) \mid \gamma\in G\dot\ast_M)\}=
\inf\{\tau(g) \mid g\in G\}$$
and $G\dot\ast_M$ has UQC if and only if $G$ does. 
\end{proof}

\begin{remark} The algebraic argument that we have just used to show that  $G\dot\ast_M$ has UQC 
can be rephrased in terms of the action of $G\dot\ast_M$ on the Bass-Serre tree of the splitting. A similar argument
shows that whenever  a finitely generated group $G$  is undistorted in an HNN extension $G\ast_M$,  the extension
will have UQC if and only if $G$ does.   
\end{remark}

The following simple observation will be crucial in the proof of our main results.

\begin{lemma}\label{l:uqc-n2}
If a finitely presented group $G$ has UQG, then $\d_G^{z}(n)= \d_G^{c}(n)= \d_G^{o}(n)\preceq \d_G(n^2)$.
\end{lemma}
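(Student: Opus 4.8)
The plan is to show the three functions $\d_G^{Z}(n)$, $\d_G^{c}(n)$, $\d_G^{o}(n)$ all coincide, and then bound the common value by $\d_G(n^2)$. First I would observe that in a group with UQC there are no non-trivial torsion elements: if $u\neq 1$ had finite order $o(u)$, then $\tau(u)=0$, contradicting Lemma~\ref{l:tau}. Consequently the order $o(u)$ appearing in Definition~\ref{d:d^c} is $+\infty$ for every non-trivial $u$, and is $1$ only when $u=_G 1$. When $u=_G 1$ the constraints $|p|\le o(u)/2$ and $|p|\le o(u)$ both force $p=0$, and the word $wu^p=w$ satisfies $\Area(w)\le\d_G(n)$ with no further contribution; so such pairs are harmless. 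For $u\neq 1$ the conditions $o(u)=\infty$, $|p|\le o(u)/2$ and $|p|\le o(u)$ are all vacuous, so the three maxima are literally taken over the same set of pairs $(w,u)$ with the same quantity $\Area(wu^p)+|pn|$. Hence $\d_G^{Z}(n)=\d_G^{c}(n)=\d_G^{o}(n)$.

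It remains to prove $\d_G^{o}(n)\preceq \d_G(n^2)$. Fix a pair $(w,u)$ with $|w|+|u|\le n$, $u\neq 1$ of infinite order, and $p\ge 0$ minimal with $w=_G u^{-p}$ (the case $w=_G u^{p}$ is symmetric). The two things I need to bound are $|pn|$ and $\Area(wu^p)$. For the first: since $u^p=_G w^{-1}$ we have $|u^p|_G\le|w|\le n$, while the UQC inequality gives $|u^p|_G\ge\lambda p$; therefore $p\le n/\lambda$, so $|pn|\le n^2/\lambda$, which is $\preceq n^2\preceq\d_G(n^2)$ after the usual padding. For the second: $wu^p$ is a null-homotopic word of length $|w|+p|u|\le n + (n/\lambda)\cdot n \le (1+1/\lambda)n^2$, so $\Area(wu^p)\le \d_G\big((1+1/\lambda)n^2\big)$, and this is $\preceq\d_G(n^2)$ since Dehn functions are (up to $\simeq$) unaffected by a linear rescaling of the argument — more precisely, for any constant $c$ one has $\d_G(cn)\preceq\d_G(n)$ when $\d_G$ is superadditive, and in any case $\d_G(cn)\simeq\d_G(n)$ is not needed: $\d_G((1+1/\lambda)n^2)$ is already $\preceq \d_G(n^2)$ under the stated definition of $\preceq$ because $(1+1/\lambda)n^2\le C(n^2)+C$ for suitable $C$, so $\d_G((1+1/\lambda)n^2)\le \d_G(C n^2 + C)$. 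Combining, $\Area(wu^p)+|pn|\le \d_G(Cn^2+C) + n^2/\lambda \preceq \d_G(n^2)$, and taking the maximum over all admissible $(w,u)$ gives $\d_G^{o}(n)\preceq\d_G(n^2)$.

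I do not expect any genuine obstacle here; the lemma is essentially a bookkeeping exercise. The one point that needs a little care is being precise about which pairs $(w,u)$ contribute to each of the three functions and checking that the torsion-free conclusion from UQC really does collapse all the order-dependent side conditions, so that the equality of the three functions is an identity of index sets rather than a pair of inequalities requiring separate arguments. After that, the only quantitative input is the single inequality $p\le n/\lambda$ coming from UQC, which simultaneously controls the $|pn|$ term linearly-in-$n^2$ and bounds the length of $wu^p$ by a quadratic, feeding into $\d_G$ exactly once.
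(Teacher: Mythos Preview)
Your proof is correct and follows essentially the same approach as the paper: UQC forces $G$ to be torsion-free, collapsing the three definitions, and then the single estimate $|p|\le n/\lambda$ bounds both the $|pn|$ term and $|wu^p|$ quadratically, feeding once into $\d_G$. The paper's proof is terser but identical in substance; your extra care with the $u=_G 1$ edge case and with the $\preceq$ bookkeeping is fine (though note that $|p|\le o(u)=1$ allows $p=\pm 1$, not just $p=0$ --- this case is still harmless, contributing at most $\d_G(n)+n$).
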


\begin{proof} If $G$ has UQG then it is torsion-free and hence $\d_G^{z}(n)=\d_G^{c}(n)= \d_G^{o}(n)$.
In the torsion-free case,  the definition of $\d_G^{c}(n)$ involves the maximum of ${\rm{Area}}(wu^p)$ 
with $w=u^{-p}$, where $p\in\Z$ and $|w|+|u|\le n$.
If  $G$ has UQG, there  is a constant $\lambda>0$ such that $|u^p|_G\ge\lambda |p|$,
so $w=u^{-p}$ and $|w|\le n$  implies $|p|\le n/\lambda$. Therefore,  $|wu^p| \le n (1 + n/\lambda)$, 
so ${\rm{Area}}(wu^p)  \le \d_G(n (1 + n/\lambda))$ and 
$$\d_G^{c}(n) \le \d_G(n (1 + n/\lambda)) + n^2/\lambda \simeq \delta_G(n^2).$$
\end{proof}

\subsection{Uniformly monotone cyclics}

In the following definition, the value of the constant $k$ depends
on the choice of finite generating set for $G$ but its existence does not.

\begin{definition}\label{d:UMC}
A finitely generated group $G$ is said to have {\em{uniformly monotone
cyclics}} if
there exists a constant $k$ such that $|g^i|_G\le k|g^p|_G$
for all $g\in G$ and all integers $0<i<|p|$. 
\end{definition}

One might be tempted to think that this property would follow if one knew
that $G$ has uniformly quasigeodesic cyclics, but the following example
shows that it does not.

\begin{example}\label{ex:not-cat0}
Let $G=\< x,y\mid [x^2, y]\>$ be the group obtained from $\Z^2$ by freely
attaching a square root to a primitive element. It is
easy to see, for all $n\in\Z$, that $y^nxy^{-n}$ is a geodesic word in $G$.
But $(y^nxy^{-n})^2 = x^2$. Thus $G$ does not have uniformly monotone cyclics. 
On the other hand, $G$ does have UQG, because
the standard 2-complex of the given presentation can be endowed with a metric
of non-positive curvature by metrizing the 2-cell as a Euclidean rectangle
with sides labelled $x^2$ and $y$.  (Alternatively, one can observe that $G=\Z\dot\ast_{2\Z}$.)
\end{example}

We will need the following result in the proof of 
our main results. This observation does not appear to be in the literature.
The proof is less direct than one would like, but 
uses only standard techniques.

\begin{prop}\label{p:roots} Torsion-free hyperbolic groups have
uniformly monotone cyclics.   
\end{prop}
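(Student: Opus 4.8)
The plan is to prove that in a torsion-free hyperbolic group $\G$, whenever $0<i<p$, the element $g^i$ cannot be much longer than $g^p$. The natural setting is the Cayley graph of $\G$, which is $\delta$-hyperbolic, together with the fact that the cyclic subgroup $\langle g\rangle$ is quasiconvex and quasigeodesically embedded (this is part of the standard fact, cited above from \cite{BH}, that torsion-free hyperbolic groups have uniformly quasigeodesic cyclics). So there are constants $\lambda>0$ and $c\ge 0$, depending only on $\G$ and the chosen generating set, with
$$
\lambda|n| - c \le |g^n|_\G \le \mu|n|
$$
for every $g\in\G\setminus\{1\}$ and every $n\in\Z$, where $\mu = \max_{s}|s|_\G$ over generators $s$... wait, that upper bound is not uniform in $g$; the correct uniform statement is just the lower quasigeodesic bound. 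I would instead argue as follows. First I would reduce to the case where $g$ is \emph{not} a proper power and $|g|_\G$ realises (up to bounded error) the minimal length in the conjugacy class: replacing $g$ by a conjugate changes $|g^i|_\G$ and $|g^p|_\G$ each by at most $2|h|_\G$ where $h$ is the conjugator, but this is not yet bounded, so the cleaner move is to pass to a cyclically reduced / minimal-length conjugate $g_0 = h^{-1}gh$ with $|g_0|_\G$ minimal in its conjugacy class; then $\langle g_0\rangle$ is a $\rho$-quasigeodesic with $\rho$ depending only on $\G$ (not on $g_0$), because the axis of $g_0$ in the Cayley graph passes uniformly close to the basepoint.

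The heart of the argument is then a \emph{hyperbolic geometry} estimate about the broken geodesic (or the quasigeodesic $\langle g_0\rangle$-orbit) in the Cayley graph. Consider the three points $1$, $g_0^i$, $g_0^p$ on the quasi-axis of $g_0$, with $0<i<p$ so that $g_0^i$ lies between $1$ and $g_0^p$ along the quasigeodesic. By the Morse lemma (stability of quasigeodesics), the geodesic $[1,g_0^p]$ stays within a bounded Hausdorff distance $H=H(\delta,\rho)$ of the $\langle g_0\rangle$-orbit segment from $1$ to $g_0^p$, so there is a point $z$ on $[1,g_0^p]$ with $d(z,g_0^i)\le H$. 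Therefore
$$
|g_0^i|_\G = d(1,g_0^i) \le d(1,z) + H \le d(1,g_0^p) + H = |g_0^p|_\G + H,
$$
since $z$ lies on a geodesic from $1$ to $g_0^p$ and hence $d(1,z)\le d(1,g_0^p)$. This gives $|g_0^i|_\G \le |g_0^p|_\G + H$ with $H$ uniform. Finally I would transfer the estimate back from $g_0$ to $g$: using the quasigeodesic bounds, $|g_0^n|_\G$ and $|g^n|_\G$ differ by at most an additive-and-multiplicative amount controlled by $\lambda, c$ and the minimal conjugacy length, and — crucially — since both are $\simeq |n|$ with uniform constants, one gets $|g^i|_\G \le k|g^p|_\G$ for a uniform $k$ once $|g^p|_\G$ is bounded below by $\lambda$ (true since $\G$ is torsion-free, so $g^p\neq 1$ for $p\neq 0$, giving $|g^p|_\G\ge\lambda$), absorbing the additive constant into the multiplicative one.

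I expect the main obstacle to be making the transfer step between $g$ and its minimal-length conjugate $g_0$ genuinely uniform: a priori the conjugator $h$ has unbounded length, so one cannot naively compare $|g^i|_\G$ with $|g_0^i|_\G$ term by term. The way around this is to never compare them directly, but to observe that the quantity we care about, $|g^i|_\G / |g^p|_\G$ (or rather the additive version), is essentially conjugation-invariant \emph{up to uniform multiplicative error} because of the two-sided bound $\lambda|n|-c \le |g^n|_\G \le |g^n|_{\G,\mathrm{conj}} + 2\min_{\text{conj}} $ — more precisely, $|g^n|_\G$ is within a uniform multiplicative constant of $|g_0^n|_\G$ for all $n$ simultaneously, by comparing both to $|n|$. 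Getting the bookkeeping right here, and stating the intermediate quasiconvexity/Morse-lemma inputs with uniform (i.e.\ $g$-independent) constants, is the only delicate point; the geometry itself is the routine Morse-lemma picture. An alternative, purely algebraic route would use that in a torsion-free hyperbolic group every element has a unique maximal root and the centraliser of $g$ is infinite cyclic, reducing to the root-free case directly, but the geometric argument above seems shortest.
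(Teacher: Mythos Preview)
Your setup is sound and the Morse-lemma estimate $|g_0^i|\le |g_0^p|+H$ for the minimal-length conjugate $g_0$ is correct with uniform $H$. You have also correctly identified the real obstacle: transferring this back to $g$ when the conjugator $h$ (with $g=hg_0h^{-1}$) has unbounded length. But your proposed fix --- that $|g^n|$ and $|g_0^n|$ lie within a uniform multiplicative constant of each other ``by comparing both to $|n|$'' --- does not work. UQC supplies only a uniform \emph{lower} bound $|g^n|\ge\lambda|n|$; the only general upper bound is $|g^n|\le |g|\cdot|n|$, and $|g|$ (equivalently $\tau(g)$) is unbounded over $\G$. Taking $n=1$, your claim would force $|g|\le C|g_0|$ for a constant $C$ independent of $g$, i.e.\ every element would have length bounded by a fixed multiple of the shortest length in its conjugacy class, which already fails in any non-elementary hyperbolic group. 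More tellingly, your transfer step invokes nothing beyond UQC, so if it were valid it would prove that every UQC group has uniformly monotone cyclics; Example~\ref{ex:not-cat0} shows this is false.

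What is missing is precisely a bound on the conjugator, and the paper confronts this head-on rather than trying to circumvent it. Writing $x$ for a shortest conjugator from $g$ to $g_0$, the paper proves $|x|\preceq |g^p|$ for \emph{every} $p\ge 1$. The argument is a thin-quadrilateral estimate in the Cayley graph (vertices $1,\ g^p,\ g^px,\ x$), and the key input --- where torsion-freeness enters via uniqueness of roots --- is that $x$ is simultaneously a \emph{shortest} conjugator from $g^p$ to a cyclic conjugate of $g_0^{\,p}$; this minimality forbids the two $x$-sides of the quadrilateral from coming close to one another, forcing one of them close to the geodesic side $[1,g^p]$ instead. Once $|x|\le C|g^p|$ is available, the bound $|g^i|\le 2|x|+|g_0^i|$ combines with your Morse estimate and $|g_0^p|\le |g^p|+2|x|$ to finish. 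Controlling $|x|$, not the Morse lemma, is the heart of the matter.
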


\begin{proof}  
Let $H$ be a torsion-free hyperbolic group and $h\in H$ a non-trivial element.
As $|h^j|_H = |h^{-j}|_H$, we restrict our attention to positive powers. 

If $H$ is free with a fixed basis and $u$ is the freely reduced word representing
$h$, then we pass to the cyclically reduced
form $w=x^{-1}u x$, where $x$ is the shortest prefix of  $u$ such that $w$ is cyclically reduced.
Geometrically, thinking of the Cayley tree for $H$, there is a geodesic line labelled $w^*$ passing through 
the vertex $x$ and $h$ is the vertex from which, following an arc labelled $x$, one meets the line at the vertex
$xw$.
Then, for all $p$, the unique reduced (geodesic) word representing $h^p$
is $x w^p x^{-1}$,  hence $|h^p| = 2 |x| + p|v|$, and in particular 
for each $0<i<p$ we have  $|h^i|< |h^p|$. 

\def\e{{\varepsilon}}

In the case of an arbitrary torsion-free hyperbolic group, we will be guided by 
the geometry behind the above proof (cf.~\cite{BH}, p.452), in particular we will consider analogues of $x$ and $w$
and will need to control the length of both. 

Let $h\in H\ssm\{1\}$. 
We consider those words $w$ in the generators of $H$ 
that are shortest  among all words representing elements
in the conjugacy class of $h$; suppose $w=x_w^{-1}h x_w$ in $H$.
Among these, we fix $w$ so that $|x_w|_H$ is minimal, 
and we fix a shortest word $x$ representing $x_w$. 
In contrast to the free case,  a non-trivial argument is needed to bound $|x|$ in terms of $|h^p|_H$
(cf.~example \ref{ex:not-cat0}).
As in the free case, there is a line in the Cayley graph labelled $w^*$ through the vertex $x_w$ and 
$x$ is the label on a shortest path from the vertex $1$ to this line.  (The line will not be 
a geodesic in general.) The following argument shows
that $x$ is also the label on a shortest path from the vertex $h^p$ to this line for all $p>0$. 

Let $x_1$ be any word that conjugates $h^p$ to an element
$u_p$ represented by a cyclic conjugate of $w^p$, where $p> 0$;
say $x_1^{-1} h^p x_1 = w_0^{-1}w^p w_0$, with $w_0$ a prefix of $w$.
Then, because roots are unique\footnote{this is a crucial use of the torsion-free hypothesis} in torsion-free hyperbolic groups, 
$x_1$ conjugates $h$ to $w_0^{-1}ww_0$, which being a cyclic conjugate of
$w$ has reduced length $|w|$. Therefore, 
\begin{equation}\label{e:least}
|x_1|\ge |x|=|x_w|_H,
\end{equation}
as $|x|$ was chosen to be minimal.

If $G$ is $\eta$-hyperbolic and  $|w|\ge 8\eta+1$, 
then the infinite paths in the Cayley graph labelled $w^*$ 
are $(8\eta+1)$-local geodesics
and hence $(l,\e)$-quasi-geodesics for suitable constants
$l,\e$ (see \cite{BH}, p.405). For each of the  
geodesics $w$ of length at most $8\eta$ representing elements of  $H\ssm\{1\}$,
the infinite paths labelled $w^*$ are quasi-geodesics whose constants
depend on $w$; since this set is finite, we can take absolute constants
covering them all.
Thus we may assume that $w^*$ is an 
$(l',\e')$-quasi-geodesic for all $w$ under consideration. We fix a constant $k_0>0$
such that every $(l',\e')$-quasi-geodesic triangle and quadrilateral in $H$
is $k_0$-thin (see \cite{BH}, p.402).

\smallskip

{\em{Claim}}: For all $p>0$,
either $|x| \le |h^p|_H + 2k_0+1$ or $|w^p|_H \le 4k_0+2$.
\smallskip

Fix a geodesic word $\sigma_p$ for $h^p$ and  consider the quadrilateral in the Cayley graph of $H$ with 
vertices $y_1, y_2, y_3, y_4$ and sides (read
in cyclic order) labelled $x^{-1},\sigma_p, x, w^{-p}$.

%%%FIGURE 2%%%

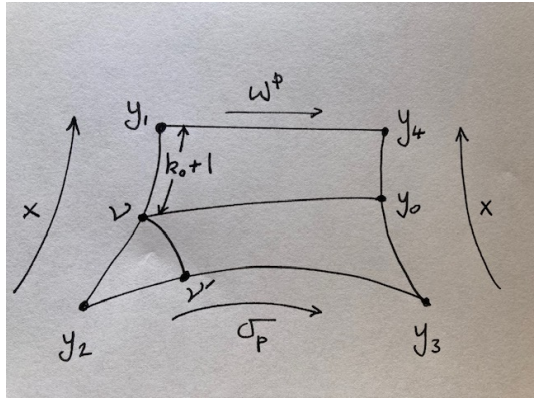
\begin{figure}[ht]
\begin{tikzpicture}[line cap=round, line join=round, >=Stealth]

% Rectangle corners
\coordinate (y1) at (0,3);
\coordinate (y2) at (0,0);
\coordinate (y3) at (5,0);
\coordinate (y4) at (5,3);

% Side points
\coordinate (z)  at (0,1.55);
\coordinate (y0) at (5,1.55);

% bottom point
\coordinate (n)  at (1.55,0);

% Top and bottom boundaries
\draw[thick] (y1) .. controls (1.7,3.28) and (3.3,3.28) .. (y4);
\draw[thick] (y2) -- (y3);

% Left and right sides of rectangle
\draw[thick] (y2) -- (y1);
\draw[thick] (y3) -- (y4);

% Middle curve from z to y0
\draw[thick] (z) .. controls (2.2,1.8) and (3.1,1.8) .. (y0);

% Points and labels
\fill (y1) circle (2pt) node[left] {$y_1$};
\fill (y2) circle (2pt) node[below] {$y_2$};
\fill (y3) circle (2pt) node[below] {$y_3$};
\fill (y4) circle (2pt) node[right] {$y_4$};
\fill (z)  circle (2pt) node[left] {$\nu$};
\fill (y0) circle (2pt) node[right] {$y_0$}; 
\fill (n) circle (2pt) node[below] {$\nu'$};

% Top labels 
\node at (2.6,3.6) {$w^p$};

% Top arrow (now below the labels)
\draw[-{Stealth[length=3mm]}] (1.2,3.35) -- (4.0,3.35);

% Bottom arrow and label
\draw[-{Stealth[length=3mm]}] (1.1,-0.7) -- (3.9,-0.7);
\node at (2.5,-1.03) {$\sigma_{p}$};

% Left and right x arrows, made straight
\draw[-{Stealth[length=3mm]}] (-1,0.4) -- (-1,2.7);
\node[left] at (-1,1.55) {$x$};

\draw[-{Stealth[length=3mm]}] (6,0.4) -- (6,2.7);
\node[right] at (6,1.55) {$x$};

% Circular arc from [y2,y3]   ending at z 
\draw[thick] (y2) ++(1.55,0) arc (0:90:1.55); % ends exactly at z

%\node at (-0.2,0.72) {$\ell$};

\end{tikzpicture}
        \caption{Bounding $|x|$}\label{fig2}
\end{figure} 

%%%% END FIGURE 2 %%%%%

An important point to observe is that there is no path in the Cayley graph that has length less than $|x|$ 
and connects either  $y_2$ or $y_3$ to the fourth side (the arc labelled $w^p$), for if there were then the label
$x_1$ on this path would contradict the minimality of $|x|$, as in the argument leading to (\ref{e:least}).

Suppose $|x|>k_0$ and consider the vertex $\nu$
on the first  
side that is a distance $k_0+1$ from $y_1$. By thinness of the 
quadrilateral, 
we know that $\nu$ is a distance at most $k_0$ from either the second or third side of the quadrilateral.
If it is the second side $[y_2,y_3]$, say $d(\nu,\nu')\le k_0$ with $\nu'\in [y_2,y_3]$, then 
$$|x| = d(y_1,y_2)\le d(y_1,\nu)+d(\nu, \nu') + d(\nu', y_2) 
\le k_0+1 + k_0 +  |h^p|$$ and we are done. If
it is the third side $[y_3,y_4]$, then let $y_0$ be a closest point to $\nu$.
Note that $d(y_3,y_0)\ge |x| - (2k_0 +1)$, for if not then 
$d(y_1,y_3) \le d(y_1, \nu) + d(\nu, y_0) + d(y_0, y_3) < |x|$,
contradicting the minimality of $|x|$, as in (\ref{e:least}).
It follows that $d(y_0, y_4)\le 2(k_0+1)$ and hence
$|w^p|_H\le d(y_1,\nu) + d(\nu, y_0) + d(y_0,y_4) \le 4k_0+2$. 
This finishes the proof of the Claim.

Let $\lambda$ be as in Definition \ref{d:UQC} and let $k_1=\lceil (4k_0+3)/\l\rceil$.
If we replace $p$ by $k_1p$ in the Claim, then the second case does not arise, so
for all $p>0$ and $h\neq 1$ we have
\begin{equation}\label{e:crude}
|x| \le   |h^{k_1p}|_H + 2k_0 +1 \le k_1\, |h^p|_H + 2k_0 + 1 \le 
(k_1 + 2k_0 + 1) |h^p|_H.
\end{equation}

Since the paths labelled $w^*$ are quasi-geodesics with uniform constants,
there exist $\mu_1,\mu_2>0$ such that $\mu_1 j |w| \le |w^j|_H \le \mu_2 j |w|$
for all $w$ and all $j\in\Z$, from which it follows that
$|w^i|_H \le (i\mu_2/p\mu_1)|w^p|_H$ for all $w$ and all $0<i<p$.

Thus, for all $h\neq 1$ and all $0<i\le p$, we have
$$
|h^i|_H = |xw^ix^{-1}|_H
\le 2|x| + |w^i|_H
\le 2|x| + \frac{i\mu_2}{p\mu_1} |w^p|_H
\le 2|x| + \frac{i\mu_2}{p\mu_1}(|h^p|_H + 2|x|),
$$ 
and the estimate (\ref{e:crude}) completes the proof.
\end{proof}

\begin{cor}\label{c:roots} If $H_1,\dots,H_n$
are torsion-free hyperbolic groups, then $H_1\times\dots\times H_n$
has  uniformly monotone cyclics.  
\end{cor}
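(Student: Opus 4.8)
The plan is to deduce Corollary \ref{c:roots} from Proposition \ref{p:roots} by working coordinatewise. Write $H=H_1\times\dots\times H_n$ and, for each $j$, fix a finite generating set $S_j$ of $H_j$. Taking $S=\bigcup_j S_j$ (with the canonical inclusions $H_j\hookrightarrow H$) as a generating set for $H$, the associated word metric is $\ell^1$-additive across the factors,
$$|(h_1,\dots,h_n)|_H=\sum_{j=1}^n |h_j|_{H_j},$$
exactly as used in the proof of Lemma \ref{l:UQC}. Since the property of having uniformly monotone cyclics does not depend on the choice of finite generating set, it suffices to verify Definition \ref{d:UMC} for $H$ with respect to this particular $S$.

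Now fix $g=(g_1,\dots,g_n)\in H$ and integers $i,p$ with $0<i<|p|$. For each index $j$ there are two cases. If $g_j=1$, then $g_j^i=1=g_j^p$, so $|g_j^i|_{H_j}=0=|g_j^p|_{H_j}$ and the inequality $|g_j^i|_{H_j}\le k_j|g_j^p|_{H_j}$ holds for any positive $k_j$. If $g_j\neq 1$, then Proposition \ref{p:roots} applied to the torsion-free hyperbolic group $H_j$ (with generating set $S_j$) supplies a constant $k_j$, depending only on $H_j$ and $S_j$, such that $|g_j^i|_{H_j}\le k_j|g_j^p|_{H_j}$ for all $0<i<|p|$. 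Because there are only finitely many factors, we may set $k=\max\{k_1,\dots,k_n\}$, a constant independent of $g$. Summing the coordinatewise inequalities and using $\ell^1$-additivity gives
$$|g^i|_H=\sum_{j=1}^n |g_j^i|_{H_j}\le k\sum_{j=1}^n |g_j^p|_{H_j}=k\,|g^p|_H,$$
which is precisely the inequality required by Definition \ref{d:UMC}.

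I do not expect any genuine obstacle in this argument: the only facts of substance are the $\ell^1$-additivity of the word metric on a direct product for a suitably chosen generating set, and the finiteness of the index set $\{1,\dots,n\}$ (so that a single constant $k$ works for all coordinates and all $g$); both are routine, and all the real work has already been carried out in Proposition \ref{p:roots}.
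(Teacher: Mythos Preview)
Your proof is correct and follows exactly the same approach as the paper: choose the union of coordinate generating sets so that the word metric on $H$ is $\ell^1$-additive, apply Proposition \ref{p:roots} in each factor, and take $k=\max_j k_j$. The paper's version is simply more terse (it does not separate out the case $g_j=1$, which is trivial anyway), but there is no substantive difference.
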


\begin{proof}
If we take finite generating sets $A_i$ for $H_i$ and $A_1\cup\dots\cup A_n$
for $H=H_1\times\dots\times H_n$, then for all $h=(h_1,\dots,h_n)\in H$ and $i\neq 0$
we have $|h^i|_H = |h_1^i|_{H_1}+\dots + |h_n^i|_{H_n}$. Thus the corollary
follows immediately from the proposition.
\end{proof}

\subsection{Uniformly proper cyclics and cyclic semigroup membership}\label{s:UPC}

It is easy to verify that the following property is independent of the choice of finite generating set for $\G$, although
the exact function $\rho(n)$ is not.
 
\begin{defn}\label{d:depart}
Let $\G$ be a finitely generated group. We say that the cyclic subgroups of $\G$ 
are {\em uniformly proper} 
if there is a  recursive function $\rho:\N\to \N$ such that, for all 
$\g\in\G$ with $|\g|_\G\le n$,  either $\g$ is a torsion element of order less that $\rho(n)$, or
else $|\g^p|_G>n$ for all $p\ge\rho(n)$.
Equivalently, 
both the torsion-evolution function ${\mathfrak{t}}_\G(n)$ and
the cyclic return function ${\mathfrak{m}}_\G(n)$, as defined in (\ref{d:d^c}),  are bounded above by a recursive function of $n$. 
\end{defn}

\begin{prop}\label{p:semigps} For a finitely generated group $\G$, the
following conditions are equivalent:
\begin{enumerate}
\item[(i)] $\G$ has solvable word problem and uniformly proper cyclics;
\item[(ii)] in $\G$, there is uniform algorithm
to solve the membership problem for the cyclic sub-semigroups.
\end{enumerate} 
If $\G$ is finitely presented, then each of these conditions is equivalent to
\begin{enumerate}
\item[(iii)] $\d_\G^{o}(n)$ is a recursive function.
\end{enumerate}
\end{prop}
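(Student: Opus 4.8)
The plan is to prove the cycle of implications (iii) $\Rightarrow$ (i) $\Rightarrow$ (ii) $\Rightarrow$ (i), and then (i) $+$ finite presentability $\Rightarrow$ (iii), using the same style of diagram-chasing that underlies the Bounded Noise Lemma and Theorem \ref{t:iff}. First I would observe that if $\G$ is finitely presented and $\d_\G^o(n)$ is recursive, then in particular $\d_\G(n)\le \d_\G^o(n)$ is recursive (take $u$ empty in the definition), so $\G$ has solvable word problem; and Remark \ref{r:2.3} shows that $\d_\G^o(n)$ dominates both ${\mathfrak t}_\G(n)$ (via the empty-$w$ substitution giving $\d_\G^o(n)\ge {\rm Area}(u^{o(u)})+{\mathfrak t}(n)$) and ${\mathfrak m}_\G(n)$ (the $|pn|$ summand in the definition forces $\d_\G^o(n)\ge |p|\,n$ whenever $w=u^{-p}$ with $|w|+|u|\le n$, and for an infinite-order $u$ with $|u|,|u^p|\le n$ one takes $w$ a geodesic word for $u^p$, so $\d_\G^o(n)\ge n\,{\mathfrak m}(n)$). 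Hence ${\mathfrak t}_\G(n)$ and ${\mathfrak m}_\G(n)$ are bounded above by a recursive function, which is (i).

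For (i) $\Rightarrow$ (ii): given $\G=\langle X\mid R\rangle$ (for the semigroup membership question it suffices that $\G$ be finitely generated with solvable word problem), to decide whether a word $v$ lies in the sub-semigroup generated by $u$ one enumerates $p=1,2,3,\dots$ and checks whether $v=_\G u^p$, which is decidable by solvable word problem. The point is to know when to stop. Using uniform properness, set $n:=|v|_X+|u|_X$ (after replacing $u,v$ by geodesic representatives, which requires only solvable word problem). If $u$ has infinite order, then $|u^p|_G>|v|_G$ for all $p\ge\rho(n)$, so no $p\ge\rho(n)$ can satisfy $v=_\G u^p$; if $u$ has finite order it is $<\rho(n)$, so again it suffices to test $p<\rho(n)$. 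Either way the search terminates after at most $\rho(n)$ steps, and this procedure is uniform in the presentation.

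For (ii) $\Rightarrow$ (i): a cyclic semigroup membership algorithm applied to the pair $(u,u\inv)$ (or to $(u,1)$, interpreting $u^0=1$) decides whether $u\inv$, equivalently whether $1$, lies in the sub-semigroup $\langle u\rangle^+$, which happens precisely when $u$ has finite order; comparing the least witnessing exponents then computes $o(u)$ when finite, giving a recursive bound on ${\mathfrak t}_\G(n)$, and for infinite-order $u$ one extracts a recursive bound on ${\mathfrak m}_\G(n)$ by running the membership algorithm on all pairs $(u, w)$ with $w$ a word of length $\le n$ and recording the largest exponent produced; solvable word problem for $\G$ follows because membership of $1$ in $\langle u\rangle^+$ together with the ability to compare exponents lets one decide $w=_\G 1$. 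The last implication, (i) $+$ finite presentability $\Rightarrow$ (iii), is the main obstacle: one must show ${\rm Area}(wu^p)$ is recursively bounded whenever $|w|+|u|\le n$ and $w=_\G u^{-p}$ with $|p|\le o(u)$. The bound on $|p|$ is exactly what uniform properness supplies — if $o(u)=\infty$ then $|p|\le {\mathfrak m}(N)$ for some recursive $N=N(n)$ (since $|u^{-p}|_G=|w|_G\le n$), and if $o(u)<\infty$ then $|p|\le o(u)\le {\mathfrak t}(n)$ — so $|wu^p|\le n+|p|\,|u|$ is bounded by a recursive function of $n$; then ${\rm Area}(wu^p)\le \d_\G(|wu^p|)$ and $\d_\G$ is recursive because $\G$ is finitely presented with solvable word problem (the word problem is solvable by the argument in (ii) $\Rightarrow$ (i), and for a finitely presented group with solvable word problem the Dehn function is recursive by the standard enumeration argument). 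Adding the $|pn|$ term, which is itself recursively bounded, finishes (iii). The care needed is in checking that every bound invoked is genuinely recursive and uniform, and in handling the torsion and infinite-order cases in a single clean statement, exactly as in Definition \ref{d:depart}.
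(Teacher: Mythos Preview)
Your overall structure matches the paper's: you prove (ii)$\Leftrightarrow$(i) in general and (i)$\Leftrightarrow$(iii) under finite presentability, with essentially the same ideas for (i)$\Rightarrow$(ii), (iii)$\Rightarrow$(i), and the torsion/infinite-order case split in (i)$\Rightarrow$(iii).

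There is, however, one genuine gap, in your derivation of the word problem during (ii)$\Rightarrow$(i). You observe that testing whether $u^{-1}$ (or $1$) lies in $\langle u\rangle^+$ detects finite order, and then assert that ``comparing the least witnessing exponents'' computes $o(u)$ and in turn lets one decide $w=_\G 1$. But hypothesis (ii) gives only a yes/no oracle for membership in cyclic sub-semigroups; it does not hand you the witnessing exponent. To extract the least $p\ge 1$ with $w^p=1$ you would have to test $w^1=1,\,w^2=1,\dots$, which \emph{is} the word problem you are trying to establish --- a circularity. The paper avoids this with a one-line observation you are missing: feed the algorithm the pair with generator the empty word and element $w$; the sub-semigroup generated by $1$ is $\{1\}$, so the oracle answers ``yes'' precisely when $w=1$. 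Once the word problem is secured this way, the rest of your (ii)$\Rightarrow$(i) argument (computing ${\mathfrak t}(n)$ and ${\mathfrak m}(n)$ by bounded search, with witnesses now found by checking $w=u^p$ for successive $p$) goes through as you describe.

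A smaller imprecision occurs in (i)$\Rightarrow$(iii): you argue that $\d_\G^o(n)$ is bounded by a recursive function via $\Area(wu^p)\le\d_\G(|wu^p|)$, but (iii) demands that $\d_\G^o$ be computable \emph{exactly}. You should note that for each of the finitely many relevant triples $(w,u,p)$ the exact value of $\Area(wu^p)$ can be computed: the Bounded Noise Lemma makes the search for a minimal-area expression of the form (\ref{w:equ}) a finite check at each candidate area. The paper makes this step explicit.
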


\begin{proof} First we prove $(ii)\implies (i)$.
Suppose that there is an algorithm that, given words $x$ and $y$ in the generators, will decide whether or not
there is an integer $p>0$ such that $x=y^p$ in $\G$.
Taking $y$ to be the empty word solves the word problem in $\G$. Taking $y=x^{-1}$
enables us to determine if $x$ has finite order in $\G$ or not. If $x$ does have finite
order, then we can use the solution to the word problem to determine its order;
by definition,  ${\mathfrak{t}}_\G(n)$ is the maximum of the orders among words of length $n$.
If $x$ has infinite order in $\G$, then for each word $w$ with $|w|\le n$ the algorithm
identifies whether or not $w$ equals a positive power of $x$; if it does,
the solution to the word problem finds this unique power $p_w(x)$.  
Let $\rho_0(n)$ be the maximum of these $p_w(x)$ with $|x|\le n$ and $|w|\le n$. Then $\rho(n)=\rho_0(n)+{\mathfrak{t}}_\G(n)+1$
is a recursive function with the properties required in Definition \ref{d:depart}.

Conversely, if $\G$ has a solvable word problem and $\rho(n)$ is as in definition
\ref{d:depart}, then given words  $x,y$ with $\max\{|x|, |y|\}\le n$, we use the solution to 
the word problem to check whether or not $x=y^p$ in $\G$ for  $p=1,\dots,\rho(n)$,
and conclude that $x$ is not in the semigroup generated by $y$ if none of these
equalities holds.

We prove $(iii)\implies (i)$ if $\G$ is finitely presented.  Assume $\d_\G^{o}(n)$ is recursive.
By definition, $\d_\G(n)\le \d_\G^{o}(n)$,
and a finitely presented group has a solvable word problem if and only if
its Dehn function is (equivalently, is bounded above by) a recursive function. Thus 
$\G$ has a solvable word problem. The presence of the summand $|pn|$
in the definition of $\d_\G^{o}(n)$ ensures that if $\g$ is a torsion
element and $|\g|_\G\le n$ then the order of $\g$ is less than $\d_\G^{o}(n)$, and 
so   ${\mathfrak{t}}_\G(n)$ is bounded above by this recursive
function (Remark \ref{r:2.3}).  It follows that ${\mathfrak{t}}_\G(n)$ can be calculated exactly by using the
solution to the word problem to check whether $u^p=1$
for each word of length at most $n$ and each positive integer $p\le \d_\G^{o}(n)$.
The inclusion of the $|pn|$ term in $\d_\G^{o}(n)$ also ensures that 
$\dc_\G (n) \le \d_\G^{o}(n)$, from which it follows that $\dc_\G (n)$ is also recursive.

Finally, we prove $(i)\implies (iii)$ if $\G$ is finitely presented.  Given $n\in\N$, we list pairs of 
words $u,w$ with $\max\{|u|, |w|\}\le n$ and for each positive integer $p\le {\mathfrak{t}}_\G(n)$
we use the solution to the word problem to determine if $u^p=1$ in $\G$.
Thus we determine the order of each $u$. If $u$ has order $o$
then for each positive $p< o$ we use the solution to the problem to decide if
$w=u^p$. If it does, then we calculate ${\rm{Area}}(wu^{-p})$:
this can be done by naively testing all free equalities of the form (\ref{w:equ}) until one
is found to be valid, then searching for an
equality of the same form with fewer factors,  testing only the finitely many
that satisfy the conclusion of the  Bounded Noise Lemma \ref{l:BNL}.

If $u$ is found to have infinite order, then for each positive $p\le \dc_\G (n)$
we use the solution to the word problem to determine whether or not $w=u^p$ or 
$w=u^{-p}$ in $\G$. Whenever a valid equality is found, we proceed as in 
the previous paragraph to calculate ${\rm{Area}}(wu^{\pm p})$.
When these calculations are done, we have computed a finite set of areas,
each with an associated integer $p$. 
By taking the maximum of these areas, after adding $|pn|$ to each, we have computed
$\d^{o}(n)$.
\end{proof}

\section{Lower bounds on absolute conjugator length}\label{s:dist-CL}

We once again work with an exact sequence $1\to N\to G\to Q\to 1$ and its
fibre product $P<G\times G$, with $G$ torsion-free and hyperbolic and $Q$ 
finitely presented.  We fix   $A\subseteq N$ and presentations $G = \<X\mid R\>$
and $Q=\<X\mid R\cup A\>$ with $A\subset X$,  as in Section \ref{s:3}, and we work with the generating set
for $P$ that is described in Lemma \ref{l:3.1}, as well as the obvious generators for $G\times G$.
Note that $|(g_1, g_2)|_P \ge \frac{1}{2}|(g_1, g_2)|_{G\times G}$ for all $(g_1,g_2)\in P$. 

The following lower bound does not require any control on the geometry of cyclic
subgroups in $Q$.

\begin{prop}\label{p:naive-lower}
$\dis (n)\preceq \CL_P(n)$.
\end{prop}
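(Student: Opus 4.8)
The plan is to exhibit, for each $n$, a pair of elements of $P$ that are conjugate in $P$ but whose shortest conjugator has length comparable to $\dis(n)$. The natural candidates are built from the elements $\gamma_n\in N$ with $|\gamma_n|_G\le n$ and $|(\gamma_n,1)|_P\ge \dis(n)-n$ provided by Scholium \ref{l:Ndoes}. First I would fix such a $\gamma=\gamma_n$ and consider the two elements $u=(\gamma,\gamma)$ and $v=(1,1)\cdot$—no: a better choice is to conjugate a fixed short diagonal element. Concretely, pick a diagonal generator, or more robustly take $u=(\gamma,1)$ and note that $u$ and $\gamma_2^{-1}u\gamma_2$-type manipulations relate elements of $P$ via conjugators in the diagonal. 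The cleanest route: set $c=(\gamma,1)\in N\times 1 < P$ and observe that $c$ is conjugate in $P$ to $c'=(\delta^{-1}\gamma\delta,1)$ for any $\delta\in G$, via the diagonal conjugator $(\delta,\delta)$. The point is that $|c|_P$ and $|c'|_P$ are both bounded (by $n$, say, if we choose things well), while I want to force the conjugator to be long.

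That last sentence reveals where the real work lies, so let me reorganize. The honest approach is: take $u=(x,x)$ a fixed diagonal generator (so $|u|_P=1$) and $v=(\gamma,1)^{-1}(x,x)(\gamma,1)=((\gamma^{-1}x\gamma),x)$. Then $u$ and $v$ are conjugate in $P$ by $(\gamma,1)$, so $|v|_P\le 2|(\gamma,1)|_P+1$; but to make this useful I instead want $|u|_P+|v|_P$ small (comparable to $n$) and the conjugator forced to be large. Reverse the roles: I want a short $v$. Take $v=(x',x)$ where $x'=\gamma^{-1}x\gamma$ in $G$ — but $x'$ need not be short. Instead, the correct construction is to use an element of $N$ that is short in $G$ (hence in $G\times G$, hence $|(\gamma,1)|_{G\times G}\le n+1$) but whose image in $P$ realizes the distortion. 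So: let $u = (x,x)$ and let $v=(\gamma,1)(x,x)(\gamma,1)^{-1}$; both $u$ and $v$ are short in $G\times G$, namely $|u|_{G\times G}=2$ and $|v|_{G\times G}\le 2n+2$, so $|u|_P+|v|_P\le \dis(2n+2)+\dis(2)+\cdots$, which is too big. The genuine trick must instead keep $u,v$ short in $P$: take $u$ and $v$ to both be diagonal elements. Then any conjugator can be taken in the diagonal (because the centralizer structure of $P=(N\times 1)\rtimes\Delta$ forces conjugacy of two diagonal elements to be witnessed, modulo $N\times 1$, by a diagonal element), and diagonal conjugacy length in $P$ equals conjugacy length in $G$, which is linear — so diagonal elements are useless.

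So the construction that actually works: take $u=(1,1)$—trivial, useless. The right pair is $u=(x,x)$ for a generator and $v=(a,1)(x,x)(a,1)^{-1}$ for $a\in A$; iterate to build $v=(\gamma,1)(x,x)(\gamma,1)^{-1}$ where now $\gamma\in N$ is short \emph{in $P$}, say $|(\gamma,1)|_P\le n$. Then $|v|_P\le 2n+1$ and $u,v$ are conjugate by $(\gamma,1)$. The claim is that \emph{any} $P$-conjugator $\eta$ from $u$ to $v$ satisfies $|\eta|_P\succeq \dis(n')$ for an appropriate $n'\asymp n$, provided $\gamma$ is chosen (as in Scholium \ref{l:Ndoes}, applied with word-length in $P$ via the distortion dictionary) so that $\gamma$ is $P$-short but $G\times G$-long. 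Here the key algebraic input is that the set of $P$-conjugators from $(x,x)$ to $v$ is a coset $\eta_0\,C_P(u)$ of the centralizer, and since $G$ is torsion-free hyperbolic $C_{G\times G}((x,x))$ is virtually cyclic with a bounded "transverse" part, so any conjugator $\eta$ differs from $(\gamma,1)$ by a centralizer element of controlled $G\times G$-length; this forces $|\eta|_{G\times G}\ge |(\gamma,1)|_{G\times G}-O(|\eta|_{G\times G}\text{-dependent bounded term})$, hence $|\eta|_{G\times G}\succeq|(\gamma,1)|_{G\times G}\succeq \dis^{-1}$-type quantity giving $|\eta|_P\ge\tfrac12|\eta|_{G\times G}\succeq \dis(n)$ after unwinding. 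I expect the main obstacle to be precisely this centralizer analysis — controlling $C_P((x,x))$ and showing the coset of conjugators cannot contain an element that is much $G\times G$-shorter than $(\gamma,1)$; the torsion-freeness and hyperbolicity of $G$ (uniqueness of roots, virtual cyclicity of centralizers of infinite-order elements) are what make this go through, and one must handle the case distinction according to whether the chosen generator $x$ has infinite order and whether $v$ is even cyclically reduced. Once the conjugator is pinned to the coset $(\gamma,1)\,C_P((x,x))$ with $C_P$ understood, the inequality $|\eta|_P \ge |(\gamma,1)|_P - (\text{bounded}) \ge \dis(n) - n - O(1)$ follows, and since $|u|_P+|v|_P\preceq n$ we conclude $\CL_P(n)\succeq \dis(n)$, i.e. $\dis(n)\preceq\CL_P(n)$ as claimed.
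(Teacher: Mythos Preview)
Your proposal circles the correct construction but contains a fatal confusion about the direction of distortion. You write that $\gamma$ should be chosen ``$P$-short but $G\times G$-long'', and earlier that $|(\gamma,1)|_P\le n$; this is impossible, since $|\cdot|_{G\times G}\le 2|\cdot|_P$ on $P$. Worse, it is backwards: Scholium \ref{l:Ndoes} provides $\gamma\in N$ that is \emph{$G$-short} (so $|\gamma|_G\le n$) but with $(\gamma,1)$ \emph{$P$-long} (namely $|(\gamma,1)|_P\ge\dis(n)-n$). If $(\gamma,1)$ were $P$-short as you assume, then it would itself be a short conjugator from $u$ to $v$, killing the argument. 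The reason $|v|_P$ is nonetheless small is not that $(\gamma,1)$ is short, but that one rewrites $v$ using the \emph{diagonal} element $(\gamma,\gamma)$, which satisfies $|(\gamma,\gamma)|_P=|\gamma|_G\le n$. For this rewriting to work you must take the fixed generator to be some $a\in A$ (so that $(a,1)$ is a $P$-generator), giving $v=(\gamma^{-1}a\gamma,a)=(\gamma,\gamma)^{-1}(a,1)(\gamma,\gamma)\,(1,a)$ with $|v|_P\le 2n+2$; a general diagonal generator $(x,x)$ with $x\notin A$ does not give this bound.

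Your endgame is also inverted. You try to force $|\eta|_{G\times G}$ large and then use $|\eta|_P\ge\tfrac12|\eta|_{G\times G}$, but the conjugator $\eta=(\gamma,1)$ already has $|\eta|_{G\times G}=|\gamma|_G\le n$, so $G\times G$-length alone cannot do the job. The paper's proof instead analyses the coset of conjugators $\{(a_0^p\gamma,a_0^q)\}$ (where $\langle a_0\rangle=C_G(a)$) by a dichotomy: if $|p|+|q|$ is large ($\ge\tfrac12\dis(n)$) then UQC in $G$ forces $|\eta|_{G\times G}$ large; if $|p|+|q|$ is small then the correction $|(a_0^p,a_0^q)|_P\le|p|+|q|+\varepsilon$ is small, so $|\eta|_P\ge|(\gamma,1)|_P-|(a_0^p,a_0^q)|_P\ge\tfrac12\dis(n)-n-\varepsilon$. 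Your final displayed inequality $|\eta|_P\ge|(\gamma,1)|_P-(\text{bounded})$ is the right shape for the second case, but it directly contradicts your earlier stipulation $|(\gamma,1)|_P\le n$, so the proposal as written is internally inconsistent.
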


\begin{proof} 
We fix a non-trivial element $a$ of our normal generating set $A$ of $N$.
Because $G$ is torsion-free and hyperbolic, the centraliser of $a$
is infinite cyclic, generated by $a_0$ say, with $a_0^e=a$ and $e>0$. 
Let $\e>0$ be the maximum value of $|(a_0^r,a_0^s)|_P$ for
those elements $(a_0^r,a_0^s)$ with $-e< r,s<e$ that lie in $P$.
For all $p,q$ such that $(a_0^p,a_0^q)\in P$, we  
write $p= p'e + r$ and $q= q'e + s$ with $-e< r,s < e$. %,\ |p'|\le |p|, \ |q'|\le |q|$, 
From the triangle inequality we get:
\begin{equation}\label{lb0}
|(a_0^p,a_0^q)|_P \le  |(a^{p'}, a^{q'})|_P +  |(a_0^r,a_0^s)|_P \le 
|(a^{p'},1)|_P + |(1,a^{q'})|_P + \e \le |p'|+|q'|+\e \le |p|+|q| +\e.
\end{equation}

Scholium \ref{l:Ndoes} provides us, for each $n>0$,
with an element
$\gamma\in N$ such that $|\gamma|_G\le n$ and $|(\g,1)|_P\ge \dis(n) -n$.
We consider 
the elements $\alpha=(a,a)$ and $(\g^{-1}a\g, a)$. Note that $|(a,a)|_P=1$
while $|(\g^{-1}a\g, a)|_P\le 2n +3$ because $|(1,a)|_P \le |(a,a)|_P + |(a,1)|_P=2$ and
$$(\g^{-1}a\g, a)= (\g,\g)^{-1}(a,1)(\g,\g) (1,a).$$

The centralizer $C(\alpha)$
of $\alpha:=(a,a)$ in $G\times G$ is  $\<(a_0,1), (1,a_0)\>$,
so  the elements conjugating
$\alpha$ to $(\g^{-1}a\g, a)$ in $G\times G$ are $(a_0^p\gamma , a_0^q)$
with $p,q\in\Z$. Since $(\g,1)\in P$, such an element will lie
in $P$ if and only if $(a_0^p,a_0^q)\in P$. 

Because $G$ has uniformly quasigeodesic cyclics, there is a constant
$\lambda\in (0,1)$ such that $|a_0^m|_G\ge \lambda |m|$ for all integers $m$.

If $|p|+|q|\ge \half \dis (n)$ then
\begin{equation*}\label{low1}
|(a_0^p\gamma , a_0^q)|_{G\times G} = | a_0^p\g|_G + |a_0^q|_G \ge
 |a_0^p|_G - |\g|_G +  |a_0^q|_G \ge \lambda (|p| + |q|) - n
 \ge \frac{\l}{2} \dis (n) - n,
\end{equation*}
which implies that $|(a_0^p\g, a_0^q)|_P \ge \frac{\lambda}{4} \dis (n) - \frac{1}{2}n $ if 
$(a_0^p, a_0^q)\in P$.

If  $|p|+|q|\le \half \dis (n)$ and $(a_0^p, a_0^q)\in P$, then
\begin{equation*}\label{low2}
|(a_0^p\gamma,  a_0^q)|_P \ge |(\g,1)|_P - |(a_0^p,a_0^q)|_P
\ge  |(\g,1)|_P  - (|p|+|q|+\e)
\ge \dis (n) -n  - {\frac{1}{2}} \dis (n) -\e,
\end{equation*}
where the second inequality comes from (\ref{lb0}) and the third from our choice of $\gamma$.

Thus,  every element of $P$ that conjugates $(a,a)$ to $(\g^{-1}a\g, a)$
must have length (in $P$)  at least ${\frac{\lambda}4} \dis (n) - n -\e$.
\end{proof}

\section{Lower bounds on relative conjugator length}\label{s:rel-lower}
The results in this section are needed for the sentence in
Theorem \ref{t:main} concerning the case where $Q$ is torsion-free.

The conjugator length function $\CL_H(n)$ is an instrinsic invariant of $H$,
but when $H$ is a subgroup of $\G$ one might also be interested in the following 
extrinsic  invariant. Note that here, the length of the elements being conjugated is measured
in $\G$ while the length of the conjugator is measured in $H$.

\begin{defn} Let $H$ be a finitely generated subgroup of the finitely generated group $\G$.
$$
\CL_H^\G(n) := \max_{u,v} \min_\g \{ |\g|_H \colon u,v,\g\in H,\ \g^{-1}u\g=v,\ 
|u|_\G+|v|_\G\le n\}.
$$
\end{defn}
It is clear from the definitions that
$\CL_H(n) \preceq \CL_H^\G(n)$, because the maximum in the definition of the latter 
is taken over a larger set (when suitable generators are chosen).

Once again, in the following statement
$P<G\times G$ is the fibre product associated to a short exact sequence $1\to N\to G\to Q\to 1$, with
generators for $P$ and presentations for $G$ and $Q$  chosen as  
 in Section \ref{s:3}. The functions $\d_Q^z(n)$ and $\m_Q (n)$ were defined
in (\ref{d:d^c}).

\begin{prop}\label{p:relLower}
If $Q$ is finitely presented and $G$ is torsion-free and
hyperbolic, then $\d_Q^z(n)\preceq \CL_P^{G\times G}(n) + n\, \m_Q(n)$
and $\m_Q(n) \preceq \CL_P^{G\times G}(n)$.
\end{prop}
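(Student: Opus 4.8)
The plan is to obtain both estimates from the conjugacy construction already used in the proof of Proposition~\ref{p:naive-lower}. Fix once and for all a non-trivial element $a$ of the normal generating set $A\subseteq N$; since $G$ is torsion-free hyperbolic, $C_G(a)=\langle a_0\rangle$ is infinite cyclic with $a_0^e=a$ ($e\ge1$) and $f(a_0)\in Q$ has finite order $e'\mid e$, so $e$, $e'$ and $|a_0|_G$ are absolute constants. Given $w,u$ with $|w|+|u|\le n$, $o(u)=\infty$, $w=_Q u^{-p}$ and $|p|$ least, read $w$ and $u$ as words over $X$ to get $\tilde w,\tilde u\in G=\langle X\mid R\rangle$ and put $\gamma:=\tilde w\tilde u^p\in N$ (legitimate since $wu^p=_Q1$); the object of study is the problem of conjugating $(a,a)$ to $\beta:=(\gamma^{-1}a\gamma,a)\in P$ inside $P$. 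The counting argument of Proposition~\ref{p:naive-lower} applies verbatim: the conjugators of $(a,a)$ to $\beta$ in $G\times G$ are the elements $(a_0^i\gamma,a_0^j)$, such an element lies in $P$ iff $f(a_0)^i=f(a_0)^j$, i.e.\ $i\equiv j\pmod{e'}$, and writing $(a_0^i\gamma,a_0^j)=(a_0,a_0)^i(\gamma,1)(1,a_0^{j-i})$ and splitting on whether or not $|i|\le\tfrac{1}{2|a_0|_G}|(\gamma,1)|_P$ (first case: the triangle inequality in $P$; second case: the uniform quasigeodesicity of $\langle a_0\rangle$ in $G$ together with $|(g_1,g_2)|_P\ge\tfrac12|(g_1,g_2)|_{G\times G}$) shows that every $P$-conjugator has $P$-length at least $\tfrac1C|(\gamma,1)|_P-C$, with $C=C(a)$.

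The next step is a lower bound on $|(\gamma,1)|_P$ in terms of an area. The argument behind the inequality $\d_Q(n)\preceq\disp(n)$ in Theorem~\ref{t:Pdist} in fact establishes, for any word $\omega$ of length $m$ with $\omega=_Q1$ representing $\gamma\in N$, and with $\ell:=|(\gamma,1)|_P$, the estimate $\Area_Q(\omega)\le(2C_0+1)\ell+C_0 m$, where $C_0$ is the isoperimetric constant of $G$. Applying this to the specific null-homotopic word $\omega=wu^p$ (which represents $\gamma$ in $G$), with $m=|wu^p|\le n(|p|+1)$, gives $|(\gamma,1)|_P\succeq\Area_Q(wu^p)-n(|p|+1)$. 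Combining this with the previous paragraph, and with the bound $|\beta|_{G\times G}\le2|\gamma|_G+2|a|$ for the input size, and using $|p|\le\m_Q(n)$ to absorb $n(|p|+1)$ and the summand $|pn|$ into $n\,\m_Q(n)$, produces $\d_Q^z(n)=\Area_Q(wu^p)+|pn|\preceq\CL_P^{G\times G}(n)+n\,\m_Q(n)$, the first assertion. For the second assertion one runs the same construction with $p=\m_Q(n)$ and $w$ a $Q$-geodesic word for $u^{-p}$ (so $|w|+|u|\le n$), but in place of the area bound uses the cruder $|(\gamma,1)|_P\ge\tfrac12|\gamma|_G\ge\tfrac12(|\tilde u^p|_G-|\tilde w|_G)\ge\tfrac12(\lambda|p|-n)$, from the uniform quasigeodesicity of $\langle\tilde u\rangle$ in $G$; this forces every $P$-conjugator to have $P$-length $\succeq\m_Q(n)$, giving $\m_Q(n)\preceq\CL_P^{G\times G}(n)$.

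The step I expect to be the main obstacle is making the argument of $\CL_P^{G\times G}$ come out linear in $n$ rather than of size $\simeq n\,\m_Q(n)$: the input $\beta=(\gamma^{-1}a\gamma,a)$ has $G\times G$-length controlled by $|\gamma|_G=|\tilde w\tilde u^p|_G$, which \emph{a priori} is only bounded by $n(|p|+1)$, so one must argue that $\d_Q^z(n)$ (respectively $\m_Q(n)$) is witnessed by data for which the relevant element of $N$ has $G$-length $O(n)$, or else compensate with a more efficient choice of the conjugated pair; this is where the interplay between $\d_Q^z$ and $\m_Q$ — the minimality of $|p|$, and the trivial inequality $n\,\m_Q(n)\le\d_Q^z(n)$ — must be used to keep the rescaling linear. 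A secondary (and routine) point, already visible in Proposition~\ref{p:naive-lower}, is that $\beta$ must be of the mixed form $(\gamma^{-1}a\gamma,a)$ rather than the $N$-type form $(\gamma^{-1}a\gamma,1)$: for the latter the full centraliser $C_{G\times G}((a,1))=\langle a_0\rangle\times G$ lies in $P$, and a $G$-conjugator of length $O(|\gamma|_G)$ would be available, collapsing the estimate. The hypotheses that $G$ is torsion-free (so $C_G(a)$ is cyclic and $f(a_0)$ has finite order) and that $o(u)=\infty$ (built into the definition of $\d_Q^z$) are exactly what make the first paragraph's counting argument valid.
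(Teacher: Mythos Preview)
You have correctly identified the main obstacle, but you have not resolved it, and your proposed route via a \emph{fixed} element $a\in N$ cannot be salvaged. With $\beta=(\gamma^{-1}a\gamma,a)$ and $\gamma=\tilde w\tilde u^p\in N$, the $G\times G$--length of $\beta$ is controlled only by $|\gamma|_G$, which is of order $n|p|\le n\,\m_Q(n)$; so your argument yields
\[
\Area_Q(wu^p)\ \preceq\ \CL_P^{G\times G}\bigl(n\,\m_Q(n)\bigr)+n\,\m_Q(n),
\]
with the argument of $\CL_P^{G\times G}$ blown up. The ``trivial inequality $n\,\m_Q(n)\le\d_Q^z(n)$'' cannot repair this: it only tells you that the argument is bounded by the very function you are trying to control, which is circular. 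The same defect afflicts your proof of $\m_Q(n)\preceq\CL_P^{G\times G}(n)$: you obtain $\CL_P^{G\times G}(c\,\m_Q(n))\succeq\m_Q(n)$, which is vacuous.

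The paper's fix is precisely the ``more efficient choice of the conjugated pair'' you allude to but do not carry out. Instead of conjugating a fixed $(a,a)$ by the long element $(\gamma,1)$, one conjugates the \emph{variable} element $U=(u,u)$ by the \emph{short} element $(\tilde\gamma,1)$, where $\tilde\gamma$ is the word of length $\le n$ (playing the role of your $w$) and $u=\tilde u\in G$. Then $V=(\tilde\gamma,1)^{-1}U(\tilde\gamma,1)$ has $|U|_{G\times G}+|V|_{G\times G}\le 6n$, so the input to $\CL_P^{G\times G}$ is genuinely linear in $n$. The point is that $(u^m\tilde\gamma,1)\in P$ also conjugates $U$ to $V$, so the problem is non-trivial in $P$; and because the image of $u$ (hence of the generator $u_0$ of $C_G(u)$) in $Q$ has \emph{infinite} order, the $P$--conjugators are exactly $\zeta_{qq}=(u_0^q,u_0^q)(u^m\tilde\gamma,1)$. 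From a geodesic $P$--word for some $\zeta_{qq}$ of length $\ell\le\CL_P^{G\times G}(6n)$ one extracts (as in Theorem~\ref{t:Pdist}) the area bound $\Area_Q(\tilde u^m\tilde\gamma)\preceq\ell+nm$, and independently the UQC of $G$ forces $|me+q|+|q|\preceq\ell+n$, whence $m\preceq\ell+n$. This gives both assertions at once, with linear argument. The use of $u$ rather than $a$ is what makes the centraliser constraint ``$p=q$'' (no $e'$--periodicity) and simultaneously keeps $V$ short.
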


\begin{proof} We have a finite generating
set $X$ for $G$. We consider
all words $\tilde\g ,\tilde{u}$ of length at most $n$ in the free group $F(X)$ for which 
there is an integer $m\ge 0$ such that $\tilde{u}^m\tilde\g=1$ in $Q$ and $\tilde{u}$ has
infinite order in $Q$.  
We will prove the proposition by identifying constants $c_0,\, c_1$ and $c_2$ so that  
\begin{equation}\label{e:target}
{\rm{Area}}_Q (\tilde{u}^{m}\tilde{\g} ) \le c_0 \ \CL_P^{G\times G}(6n) + c_1\, (m+1)n\ \ \ \text{  and  }\ \ \ 
m \le c_2 \, (\CL_P^{G\times G}(6n) + n).
\end{equation} 

Let $\g, u\in G$ be the images of
$\tilde\g ,\tilde{u}$.
Note that $({u}^m{\g},1)\in N\times\{1\}<P$.  
Consider $U=(u,u)$ and $V=({\g}, 1)^{-1} (u,u) ({\g}, 1)$
and note that $|U|_{G\times G}\le 2n$ while $|V|_{G\times G}\le 4n$.
Also, $V$ is conjugate to $U$ in $P$, because  
$$
V=({\g}, 1)^{-1} (u,u) ({\g}, 1) = ({u^m\g}, 1)^{-1} U ({u^m\g}, 1).
$$  
As $u\in G$ is non-trivial, its centraliser $C_G(u)$ is cyclic, generated
by $u_0$ say.  

The elements of $G\times G$ that conjugate $U$ to $V$
are  $\zeta_{pq} = (u_0^p, u_0^q) ({u}^m{\g},1)$ with $p,q\in\Z$.
The image of $u_0$ in $Q$ has infinite order, so $(u_0^p,u_0^q)\in P$
if and only if $p=q$. Hence $\zeta_{pq}\in P$ if and only if $p=q$. 
And by definition,  since $|U|_{G\times G} + |V|_{G\times G}\le 6n$, for some $q$ we have 
\begin{equation}\label{e:zq}
\ell_q:=|\zeta_{qq}|_P \le \CL_P^{G\times G}(6n).
\end{equation}

Let $W$ be a geodesic word for $\zeta_{qq}$ in the generators $(x,x), (a,1)$
of $P$ and let $W_q$ be the word obtained from $W$ by deleting 
all occurrences of the generators $(a,1)$. Reading the second coordinate we see
that $(u_0^q, u_0^q)= W_q$ in $G\times G$. Reading the
first coordinate then gives us ${u}^m{\g} = w_q^{-1} w$ in $G$, 
where $w$ and $w_q$ are the words in $F(X)$ obtained from $W$ and $W_q$ by replacing $(x,x)$ with $x$
for all $x\in X$ and $(a,1)$ with $a$ for all $a\in A$.  Note that $|w_q^{-1} w|\le 2\ell_q$. 
We now argue as in  the proof of Theorem \ref{t:Pdist}: the linear isoperimetric inequality
in $G$ provides a constant $C$ such that $\tilde{u}^m{\tilde{\g}} (w_q^{-1}w)^{-1}$
is a product in the free group $F(X)$ of at most $C (nm+n + 2\ell_q)$ conjugates of relators
for $G$; and by construction,  $w_q^{-1}w$
is freely equal to a product of $\ell'$ conjugates of the generators $a\in A^{\pm 1}$
(which are relators in $Q$),
where $\ell'\le \ell_q$ is the number of occurrences of these generators in $w$;
therefore, 
$$\Area_Q (\tilde{u}^{m}\tilde{\g} ) \le C (nm+n + 2\ell_q) +\ell_q.$$
We chose $q$ so that (\ref{e:zq}) holds,  so to satisfy (\ref{e:target}) it suffices to define $c_0:= (2C+1)$ and $c_1:=C$.

A further argument is required to identify $c_2$.
Let $e\ge 1$ be such that $u_0^e=u$ and observe that for all $q$ we have
\begin{align*}
2|\zeta_{qq}|_P = 2|(u_0^{me+q}\gamma, u_0^q)|_P
& \ge |(u_0^{me+q}\gamma, u_0^q)|_{G\times G}\\
& =  |u_0^{me+q}\gamma|_{G}+ |u_0^q|_{G} \\
&\ge |u_0^{me+q}|_G + |u_0^q|_G - |\g|_G \\
&\ge \lambda (|me+q| + |q|) -n,
\end{align*}
where $\lambda>0$ is the UQC constant for $G$ (Definition \ref{d:UQC}). 
We take $q$ so that (\ref{e:zq}) holds,  in which case
$$
|me+q| + |q| \le \frac{1}{\l}(2\, \CL_P^{G\times G}(6n) + n).
$$
Thus, since $e\ge 1$, 
$$
|m| \le |me| \le \frac{1}{\l} (2\, \CL_P^{G\times G}(6n) + n),
$$
which gives us $c_2$.
\end{proof}

All of the functions appearing in Proposition \ref{p:relLower} grow at least linearly, and for functions that grow
at least linearly, $f(n)\preceq g(n)$ implies $n\, f(n)\preceq n\, g(n)$.

\begin{cor}\label{c:relLower}
$\d_Q^z(n)\preceq n\ \CL_P^{G\times G}(n)$.
\end{cor}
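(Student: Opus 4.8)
The plan is to derive Corollary \ref{c:relLower} as an immediate consequence of Proposition \ref{p:relLower}. That proposition gives us two inequalities: first, $\d_Q^z(n)\preceq \CL_P^{G\times G}(n) + n\, \m_Q(n)$, and second, $\m_Q(n) \preceq \CL_P^{G\times G}(n)$. The idea is simply to substitute the second into the first. Writing $f(n) := \CL_P^{G\times G}(n)$ for brevity, the first inequality says $\d_Q^z(n) \preceq f(n) + n\,\m_Q(n)$, and from the second we have $\m_Q(n) \preceq f(n)$. Substituting, $n\,\m_Q(n) \preceq n\,f(n)$, and since $f(n) \le n\,f(n)$ (assuming $f$ is not identically zero, which is harmless up to $\simeq$), we obtain $\d_Q^z(n) \preceq f(n) + n\,f(n) \preceq n\,f(n)$, which is the claimed bound $\d_Q^z(n)\preceq n\, \CL_P^{G\times G}(n)$.

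The only point requiring a moment's care is that the relation $\preceq$ as defined in the paper involves precomposition with an affine rescaling of the argument (one has $f(n)\leq Cg(Cn+C)+Cn+C$), so one must check that the composition $n\cdot\m_Q(n) \preceq n\cdot\CL_P^{G\times G}(n)$ behaves correctly under this rescaling — but this is routine, since multiplying by $n$ and rescaling the argument of a monotone function interact in the expected way, and $\m_Q(n)\preceq \CL_P^{G\times G}(n)$ means $\m_Q(n)\le C\,\CL_P^{G\times G}(Cn+C)+Cn+C$, so $n\,\m_Q(n)\le Cn\,\CL_P^{G\times G}(Cn+C)+Cn^2+Cn$, and the $n^2$ term is absorbed since $\CL_P^{G\times G}(n)\succeq n$ (conjugator length functions always dominate a linear function, as one can take $u$ and $v$ of length $n$ and the conjugator of length at least the difference of their word lengths in worst case — or simply note the bound is only of interest in that regime). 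There is no real obstacle here; the substance is entirely contained in Proposition \ref{p:relLower}, and this corollary is merely recording the cleaner consequence for later use (it feeds into the $Q$ torsion-free case of Theorem \ref{t:main}, where $\d_Q^z(n)=\d_Q^c(n)$).

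I expect the proof to be a single short paragraph: state $f(n)=\CL_P^{G\times G}(n)$, invoke both halves of Proposition \ref{p:relLower}, substitute, and absorb lower-order terms using $\CL_P^{G\times G}(n)\succeq n$. If one wants to be fully careful about the $\simeq$-bookkeeping one can spell out the constants, but since the paper treats such manipulations as routine throughout (e.g.\ in the proof of Theorem \ref{t:Pdist}), a brief derivation suffices.

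\begin{proof}
Write $f(n):=\CL_P^{G\times G}(n)$. By Proposition \ref{p:relLower}, $\d_Q^z(n)\preceq f(n)+n\,\m_Q(n)$ and $\m_Q(n)\preceq f(n)$. Substituting the second estimate into the first, and using that $f(n)\succeq n$ (so that the resulting quadratic term $n\cdot n$ is absorbed into $n\,f(n)$), we obtain $\d_Q^z(n)\preceq f(n)+n\,f(n)\preceq n\,f(n)$, as claimed.
\end{proof}
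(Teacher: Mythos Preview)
Your proposal is correct and matches the paper's approach: the corollary is stated in the paper without proof, as an immediate consequence of the two inequalities in Proposition~\ref{p:relLower}, and your substitution argument is exactly the intended derivation. One small refinement: your justification that $\CL_P^{G\times G}(n)\succeq n$ is better grounded by appealing to Theorem~\ref{t:Pdist} and Proposition~\ref{p:naive-lower} (which give $\CL_P^{G\times G}(n)\succeq \CL_P(n)\succeq \d_Q(n)\succeq n$) rather than to a general claim about conjugator length functions, which need not be linear for arbitrary groups.
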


\section{Upper bounds on conjugator length}\label{s:upper}

We continue with the convention that $P<G\times G$ is the fibre product of an epimorphism $G\twoheadrightarrow Q$
and that generators and relators for these groups have been chosen as in Section \ref{s:3}.

\begin{prop}\label{p:upper}
If $G$ is torsion-free and hyperbolic, then 
$$\CL_P^{G\times G}(n) \preceq \d_Q^{c}(n).$$
\end{prop}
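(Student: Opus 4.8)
The plan is to take an arbitrary conjugating problem in $P$ with inputs bounded by $n$ in the $G\times G$ metric, reduce it to a statement about powers of a fixed element in $G$, and then invoke the definition of $\delta_Q^c(n)$ (together with Theorem~\ref{t:Pdist} and the scholia of Section~\ref{s:3}) to bound the length of a conjugator. So suppose $(\alpha_1,\alpha_2)$ and $(\beta_1,\beta_2)$ are conjugate in $P$ with $|(\alpha_1,\alpha_2)|_{G\times G}+|(\beta_1,\beta_2)|_{G\times G}\le n$. First I would dispose of the case where the common image $q\in Q$ of $\alpha_1$ (equivalently $\beta_1$, equivalently $\alpha_2,\beta_2$, since membership in $P$ forces these images to agree) is trivial: then $(\alpha_i,\beta_i)\in N$ and the problem lives inside $N\times N$, or rather one can use Scholium~\ref{scholiumNew} and the distortion bound $\disp(n)\preceq\delta_Q(n)\le\delta_Q^c(n)$ to handle conjugation by short elements directly. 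The substantive case is $q\ne 1$.

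**Next**, in the substantive case, pick a conjugator $(\gamma_1,\gamma_2)\in P$, so $\gamma_i^{-1}\alpha_i\gamma_i=\beta_i$ in $G$ for $i=1,2$. The centraliser of $\alpha_1$ in the torsion-free hyperbolic group $G$ is infinite cyclic, say $\langle u_0\rangle$ with $\alpha_1=u_0^{e}$; likewise for $\alpha_2$. The set of elements of $G\times G$ conjugating $(\alpha_1,\alpha_2)$ to $(\beta_1,\beta_2)$ is a coset of $C_G(\alpha_1)\times C_G(\alpha_2)$, so any two conjugators differ by $(u_0^p,v_0^q)$ for integers $p,q$, where $v_0$ generates $C_G(\alpha_2)$. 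The constraint that a conjugator lie in $P$ is that its image in $Q\times Q$ be diagonal, i.e.\ that the images of $u_0^p$ and $v_0^q$ in $Q$ coincide. The key point is that the image $\bar u_0$ of $u_0$ in $Q$ need not have infinite order, and this is exactly where $\delta_Q^c$ — rather than $\delta_Q^z$ — is the right function: we must produce a \emph{short} conjugator, which means choosing $p,q$ so that $u_0^p$ (resp.\ $v_0^q$) represents a minimal power of $\bar u_0$ (resp.\ $\bar v_0$) realising the required coset condition, i.e.\ $|p|\le o(\bar u_0)/2$. Uniform monotonicity of cyclics in $G$ (Proposition~\ref{p:roots}) guarantees that such a minimal choice keeps $|u_0^p|_G$ controlled by $|u_0^{o(\bar u_0)}|_G$-type quantities, which in turn is governed by $\mathrm{Area}_Q$ of the relevant relator word $u_0^{o(\bar u_0)}$.

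**Then** I would make this quantitative. Let $w$ be the word $\gamma_2^{-1}\gamma_1$ (or rather a geodesic word for the relevant difference of conjugators in $G$): we have an element of $N$, and its $P$-length is bounded via Scholium~\ref{scholiumNew} by $(L+1)\mathrm{Area}_Q(w)+|w|+n$. The word $w$ here is essentially of the form $\tilde u^{\,p}\tilde\gamma$ for short words $\tilde u,\tilde\gamma$ (lengths $O(n)$), with $p$ the least exponent making $\tilde u^{\,p}\tilde\gamma=1$ in $Q$; and $|p|\le o(\bar u)/2$ by construction. That is precisely the configuration quantified over in Definition~\ref{d:d^c}, so $\mathrm{Area}_Q(\tilde u^{\,p}\tilde\gamma)+|pn|\le\delta_Q^c(Cn)$ for a suitable constant $C$ absorbing the $O(n)$ length inflation. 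Feeding this back gives $|(\gamma_1,\gamma_2)|_P\preceq\delta_Q^c(n)$ — the $|pn|$ term in $\delta_Q^c$ is what controls the length contribution of $u_0^p$ itself (lengths $|u_0^p|_G\le |p|\cdot|u_0|_G\le |p|\cdot n$), and the $\mathrm{Area}_Q$ term controls the ``distortion'' cost of expressing the resulting element of $N$ inside $P$.

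**The main obstacle** I anticipate is the bookkeeping around the case $\bar u_0$ has finite order and the passage from ``there exists a conjugator'' to ``there exists a conjugator of the optimal form''. One has to be careful that reducing $p$ modulo $o(\bar u_0)$ (to get $|p|\le o(\bar u_0)/2$) does not destroy the coset/diagonality condition in $Q$ — this works because the image $\bar u_0^{\,o(\bar u_0)}=1$, so altering $p$ by a multiple of $o(\bar u_0)$ changes $u_0^p$ by an element of $N$, which can be reabsorbed into the $N$-part of the conjugator; but spelling this out, and simultaneously handling the $v_0,q$ side, and checking that the resulting word lengths are genuinely $O(n)$ so that the argument of $\delta_Q^c$ is linear in $n$, is the delicate part. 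Uniform monotonicity (Proposition~\ref{p:roots}) is what rescues the estimate: without it, the minimal power $u_0^p$ with $|p|\le o(\bar u_0)/2$ could still be very long in $G$, but monotonicity bounds $|u_0^i|_G$ by a constant times $|u_0^{o(\bar u_0)}|_G=|\alpha_1|_G\cdot(\text{stuff})$, keeping everything linear.
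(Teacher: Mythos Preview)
Your strategy matches the paper's --- cyclic centralisers in $G$, exponent reduction to fit the $\delta_Q^c$ template, Proposition~\ref{p:roots}, then Scholium~\ref{scholiumNew} --- and you correctly locate the delicate point. But you do not resolve it, and the resolution is not mere bookkeeping: it is the main idea of the proof. A generic conjugator has the form $(u_0^{a}\gamma_1^0,\ v_0^{b}\gamma_2^0)$ with \emph{two} unbounded exponents $a,b$. Reducing each modulo the order of its image in $Q$, as you propose, does keep you in $P$, but it leaves $\zeta_2^{-1}\zeta_1$ represented by a word with two separate power blocks $\tilde v^{-b}$ and $\tilde u^{a}$, which is not of the shape $wu^{-p}$ quantified over in Definition~\ref{d:d^c}. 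Your sentence ``the word $w$ here is essentially of the form $\tilde u^{\,p}\tilde\gamma$'' is exactly the assertion that needs proof.

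The paper closes this gap with a two-stage reduction you are missing. First, since $\CL_G(n)\preceq n$, one conjugates by a short diagonal element to arrange that the second coordinates agree, $u_2=v_2$; conjugators are then $(y_1^{q}\gamma,\,y_2^{q'})$ with $\gamma$ short. Second --- and this is the key device --- one multiplies the conjugator by a power of the element being conjugated, $(u_1,u_2)^{-p}=(y_1^{-pe_1},y_2^{-pe_2})\in P$, choosing $p$ to force the second-coordinate exponent into the range $[0,e_2)$, where $y_2^{e_2}=u_2$. Monotonicity then gives $|y_2^{r_2}|_G\le k|u_2|_G\le kn$, so the second coordinate is genuinely short in $G$; only now does a single large exponent $p'$ remain, on the first coordinate, and it can be reduced modulo $o(\bar y_1)$ as you describe. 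Taking $\sigma$ a geodesic word for $y_2^{r_2}\gamma^{-1}$ and $\tau$ one for $y_1$ (both of length $O(n)$, again by monotonicity), one has $\sigma=\tau^{p''}$ in $Q$ with $|p''|\le o(\bar\tau)/2$, which is precisely the $\delta_Q^c$ configuration. Two smaller corrections: the degenerate case to isolate is $\alpha_1=1$ or $\alpha_2=1$ in $G$ (where a diagonal conjugator suffices), not $q=1$ in $Q$ --- if $q=1$ but the $\alpha_i$ are nontrivial in $G$ the main argument runs unchanged; and your last invocation of monotonicity is garbled, since $u_0^{o(\bar u_0)}$ bears no relation to $\alpha_1=u_0^{e}$ --- what is actually used is $|y_1|_G\le k|u_1|_G$ and $|y_2^{r_2}|_G\le k|u_2|_G$ for $0\le r_2<e_2$.
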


\begin{proof}
Suppose that $U,V\in P$ are conjugate in $P$ and that $|U|_{G\times G}+ 
|V|_{G\times G}\le n$.
If $U\in G\times 1$ then $V\in G\times 1$ and $g^{-1} U g =V$
for some $g=(\gamma,1)\in G\times 1$ with 
$|\gamma|_G \le \CL_{G}(n)\simeq n$. 
In this case, $(\gamma,\gamma)$ also conjugates $U$ to $V$, and 
$|(\gamma,\gamma)|_P = \frac{1}{2}|\gamma|_G$.

A similar argument applies if $U,V\in 1\times G$, so we may assume that
$U=(u_1,u_2)$ and $V=(v_1,v_2)$ with $u_1,v_1,u_2,v_2\in G$  
all non-trivial and $|u_1|_G+|u_2|_G+|v_1|_G+|v_2|_G\le n$.

We reduce to the case $u_2=v_2$ by arguing as follows. 
Fix $C>0$ so that $\CL_G(n)\le Cn$.
As $U$ and $V$ are conjugate in $G\times G$, there exists $g\in G$
such that $g^{-1}u_2g = v_2$ with $|g|_G\le Cn$. 
Then $(g,g)^{-1}\in P$ conjugates $V$ to $V'=(gv_1g^{-1}, u_2)$
and $|V'|_{G\times G} \le n(2C +1)$. If we can conjugate $U$
to $V'$ by $z\in P$ then we can conjugate $U$ to $V$ by $z(g,g)$,
and this is efficient enough because $|z(g,g)|_P \le |z|_P + |(g,g)|_P \le |z|_P+Cn$.

With this reduction in hand, and to avoid being overwhelmed by constants,
 we assume $U=(u_1, u_2)$ and $V=(v_1, u_2)$ with 
$|U|_{G\times G} + |V|_{G\times G}\le n$. 
We fix $\gamma\in G$ with $|\gamma|_G\le Cn$ so that $\g^{-1}u_1\gamma = v_1$,
hence $(\g,1)^{-1} U (\g,1) = V$.

The centralisers of $u_1,u_2\in G$ are cyclic, generated by $y_1$ and $y_2$ say,
so the set of elements of $G\times G$ that conjugate $U$ to $V$ is
$$
\mathcal{Z}_{UV}=\{ (y_1^{q}\gamma, y_2^{q'}) \colon q, q'\in \Z\}.
$$
By hypothesis, at least one of these elements lies in $P$. We fix 
$\zeta=(y_1^{q_1}, y_2^{q_2})(\gamma,1)\in P\cap \mathcal{Z}_{UV}$. Our goal
is to manipulate $\zeta$ so as to produce $\zeta''\in P\cap \mathcal{Z}_{UV}$
with a suitable upper bound on its length.

For some $e_i>0$ and $i=1,2$ we have $y_i^{e_i}=u_i$.
Write $q_2 = p e_2 + r_2$  
with $0\le r_2 < e_2$.
Then, observing that the element on the left lies in $P\cap \mathcal{Z}_{UV}$,
consider the following equation, where $p':=q_1 -p e_1$:
\begin{equation}\label{u1}
(u_1, u_2)^{-p}\zeta  = 
(y_1^{-pe_1}, y_2^{-pe_2})\ (y_1^{q_1}, y_2^{q_2})(\gamma,1) =
(y_1^{p'}, y_2^{r_2})(\gamma,1).
\end{equation}
Let $\zeta':= (y_1^{p'}, 1)\ (1,y_2^{r_2})(\gamma,1)\in P\cap \mathcal{Z}_{UV}$.

If the image of $y_1$ in $Q$ has infinite order, or finite order $\omega\ge 2|p'|$, then define
$\zeta''=\zeta'$ and $p''=p'$. Otherwise, we write $p' = \mu \omega + p''$ with $\mu\in\Z$ and
$|p''|\le \omega/2$ and define $\zeta'' := (y_1^{\omega}, 1)^{-\mu}\zeta'$, noting that
$(y_1^{\omega}, 1)\in P$ and hence $\zeta''\in P\cap \mathcal{Z}_{UV}$. 
$$
\zeta'' = (y_1^{p''}, 1)(1,y_2^{r_2})(\gamma,1).
$$

We want to bound $|\zeta''|_P$ in terms of $\delta_Q^{c}(n)$. 
We chose $\g$ with  $|\g|_G \le Cn$.  Proposition \ref{p:roots} provides
a constant $k$ such that $|y_2^{r_2}|_G\le k |u_2|_G
$ and $|y_1|_G\le k |u_1|_G$, so $|y_2^{r_2}|_G + |y_1|_G \le kn$.
We now choose geodesic words $\sigma$ and $\tau$ in the generators $X$ of $G$ that
represent 
$y_2^{r_2}\g^{-1}$ and $y_1$  respectively.  
\begin{equation}\label{es0}
|\sigma| \le |y_2^{r_2}|_G + |\g|_G \le (C+k)n \text{   and  } |\tau|\le kn.
\end{equation} 

As $\zeta''\in P$, we have $\sigma = \tau^{p''}$ in $Q$, and we have arranged that 
$|p''| $ is at most half the order of $\tau$ in $Q$. As
$|\sigma| + |\tau| \le (C+2k)n$, from the definition of $\delta_Q^{c}(n)$ we have
\begin{equation}\label{es1}
\Area_Q (\sigma \tau^{-p''}) + |p''| (C+2k)n \le \d_Q^c((C+2k)n).
\end{equation}
From Scholium \ref{scholiumNew} we have the estimate
\begin{equation}\label{es2}
|\zeta''|_P \le (L+1)\ \Area_Q(\sigma \tau^{-p''}) +  2|\sigma \tau^{-p''}|,
\end{equation}
where $L$ is the longest relator in the presentation of $Q$.  And combining  (\ref{es0}) with the triangle 
inequality we have
\begin{equation}\label{es3}
|\sigma \tau^{-p''}| \le |\sigma| + |p''|.|\tau| \le |p''| (C+2k)n.
\end{equation}
The estimates (\ref{es1}),  (\ref{es2}), and (\ref{es3}),  tell us that we have found
$\zeta''\in P$ that conjugates $U$ to $V$ with
$$|\zeta''|_P\le (L+1)\,\d_Q^{c}((C+2k)n),$$
and this concludes the proof.
\end{proof}

\section{Theorem \ref{t:main} and Corollary \ref{c:forBR}}\label{s:summary}
 
\mainthm*

\begin{proof}
The inequality $\d_Q(n)\preceq \CL_P(n)$ follows immediately from Theorem \ref{t:Pdist}, which was proved in Section \ref{s:3},  and Proposition \ref{p:naive-lower}. The inequality $\CL_P(n)\preceq \CL_P^{G\times G}(n)$ is immediate from the definitions,
and the inequality $\CL_P^{G\times G}(n) \preceq \d_Q^{c}(n)$ was established in Proposition \ref{p:upper}. 

If $Q$ is torsion-free, then $\d_Q^c(n)=\d_Q^z(n)$,  so Corollary \ref{c:relLower} tells us that 
$\d_Q^c(n)\preceq n\,  \CL_P^{G\times G}(n)$. 

If $Q$ has uniformly quasigeodesic cyclics, then $\mathfrak{m}_Q(n)\simeq n$, so Proposition \ref{p:relLower} 
allows us to sharpen the previous estimate to $\d_Q^c(n)\preceq  \CL_P^{G\times G}(n)$, provided 
$\CL_P^{G\times G}(n)\succeq n^2$,  and this complements our earlier bound $\CL_P^{G\times G}(n) \simeq \d_Q^{c}(n)$.
\end{proof}

\maincor*

\begin{proof}
Recall that  $Q$ is said to have uniformly quasigeodesic cyclics if there is a constant $\lambda>0$ such
that $|u^p|_G\ge \lambda |p|$ for all non-trivial $u\in Q$.  This means that the 
absolute value of the exponents $p$  considered in the
calculation of $\d_Q^c(n)$ (Definition \ref{d:d^c}) can be at most $n/\lambda$, hence the words $wu^p$
whose areas contribute to $\d_Q^c(n)$ each have length at most $n + n^2/\lambda$, 
and the additional summand $|pn|$ is at most $n^2/\lambda$. Therefore $\d_Q^c(n)\preceq \d_Q(n^2)$ and the
corollary follows from the bounds in the theorem.
\end{proof}

\begin{remark}\label{r:last}
 In the preceding proof,  we estimated ${\rm{Area}}(wu^p)$ by $\d_Q(n^2)$, roughly speaking. 
When one has more control over the geometry of $Q$, this can quickly be improved.  
In particular,  if $Q$  
has uniformly monotone cyclics (Definition \ref{d:UMC}),   then there is a constant $k$ such that
$|u^i|_G\le k|u^p|_G=k|w|_G\le kn$ for all  $0<i<|p|$,  so  the loop in the
Cayley graph of $Q$ labelled $wu^p$ never leaves the ball of radius $(k+1)n$ about the identity. 
In many groups, this allows one to bound ${\rm{Area}}(wu^p)$ by something sharper than $\delta_Q(n^2)$.
For example,  if $Q=\Z^2$ then $\delta_Q(n^2)\simeq n^4$ but ${\rm{Area}}(wu^p)\preceq n^3$
if $wu^p$ is a word of length $\preceq n^2$ that labels a path in the ball of radius $\preceq n$.   
The potential of such refinements will be explored in \cite{BR} and its sequel.
\end{remark}

\section{Encoding Dehn functions into conjugator length functions}\label{s:last}

The {\em Rips Construction} \cite{rips}
is an algorithm that associates to any finite group-presentation 
$\mathcal{Q}\equiv \< X\mid {R}\>$ a short exact sequence
$$
1\to N\to G \overset{p}\to Q\to 1
$$
where $Q$ is the group presented by $\mathcal{Q}$ while
$N$ is a 2-generator group and $G$ is a torsion-free hyperbolic group
that satisfies a prescribed small-cancellation condition.
The algorithm will output a
finite aspherical presentation $\< X,a,b\mid T\>$ for $G$ such that $N=\<a,b\>$
and $|T|= 4|X| + |R|$, where the identity map on $X$ induces 
$p:G \to Q$. It is easy to check that the fibre product $P<G\times G$ associated to $p$
is generated by $(a,1),\, (b,1)$ and $\{(x,x) \mid x\in X\}$.

The 1-2-3 Theorem \cite{BBMS}  tells us that  if $Q$
is of type ${\rm{F}}_3$ then the fibre product $P<G\times G$ of $G\twoheadrightarrow Q$ 
is finitely presented. 

In order to prove Theorem \ref{t:final}, we will apply the Rips construction to a group
$Q$ of type ${\rm{F}}_3$ with uniformly quasigeodesic cyclics and controlled Dehn function.

\subsection{Forcing $Q$ to have uniformly quasigeodesic cyclics}

We restate Proposition \ref{p:makeUQC} for the reader's convenience.

\makeUQC*

In the course of the proof we shall need the following connection between
distortion and Dehn functions. Here, the notation $\G\dot\ast_H$
denotes the trivial HNN extension over $H$, that is
$$
(\G, t \mid t^{-1}ht=h \ \forall h\in H).
$$

\begin{theorem}[\cite{BH}, III.$\G$.6.20]\label{hnn}
If $\G$ is a finitely presented group and $H<\G$ is a finitely
generated subgroup, then  
$$
\max \{ \d_\G(n), \, n\ \dist_H^\G(n)\} \preceq \delta_{\G\dot\ast_H}(n)
\preceq n\ \delta_\G\circ\dist_H^\G(n).
$$
\end{theorem}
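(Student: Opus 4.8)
The plan is to read both inequalities off the combinatorics of $t$-corridors (``$t$-bands'') in van Kampen diagrams over the finite presentation $\Lambda:=\G\dot\ast_H=\<A,t\mid R,\ t^{-1}h_it h_i^{-1}\ (1\le i\le k)\>$, where $\G=\<A\mid R\>$ is a fixed finite presentation, $H=\<h_1,\dots,h_k\>$ with each $h_i$ a word in $A$, and $L$ denotes the length of the longest word in $R\cup\{h_1,\dots,h_k\}$. I would use the standard facts: in a reduced diagram the $2$-cells labelled by the relators $S_i:=t^{-1}h_ith_i^{-1}$ organise into disjoint corridors; a corridor of length $\ell$ meets the boundary in exactly two $t$-edges and has both of its sides labelled by one and the same word $\pi$ over $\{h_1^{\pm1},\dots,h_k^{\pm1}\}$ of length $\ell$ (so $\bar\pi\in H$); and deleting all corridors leaves sub-diagrams over $R$, that is, over $\G$.

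For the two lower bounds: $\d_\G(n)\preceq\d_\Lambda(n)$ is immediate from the retraction $\Lambda\to\G$, $t\mapsto1$, $a\mapsto a$, which carries each relator of $\Lambda$ to a relator of $\G$ or to the empty word, so that $\Area_\G(w)\le\Area_\Lambda(w)$ for every word $w$ over $A$. For $n\,\dist_H^\G(n)\preceq\d_\Lambda(n)$ I would fix, for each $n$, an element $h=h(n)\in H$ realising the distortion --- $|h|_\G\le n$ and $|h|_H=\dist_H^\G(n)=:D_n$ --- a geodesic word $u$ over $A^{\pm1}$ for $h$ (so $|u|\le n$), and the null-homotopic test word $w_n:=t^{-n}\,u\,t^{n}\,u^{-1}$ (which represents $t^{-n}ht^{n}h^{-1}=1$ since $t$ centralises $H$), of length at most $4n$. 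To pin down the corridor structure of any reduced diagram $\Delta$ for $w_n$, I would project its $1$-skeleton to the Bass--Serre tree $T$ of $\Lambda$, on which $\G$ fixes the base vertex $v_0$ while $t$ sends $v_0$ to an adjacent vertex; the boundary loop of $\Delta$ then leaves $v_0$ along a geodesic of length $n$ (reading $t^{-n}$) and returns along the same geodesic (reading $t^{n}$), and every $t$-corridor maps to a single edge of $T$. Hence the $n$ $t$-edges in the $t^{-n}$-block of $\partial\Delta$ map to $n$ distinct edges of $T$, no corridor can join two of them, and planarity forces the $n$ corridors to run across from the $t^{-n}$-block to the $t^{n}$-block in a nested fashion, with $\G$-sub-diagrams between consecutive ones. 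Reading those sub-diagrams shows that the successive corridor words $\pi_1,\dots,\pi_n$ all equal $\bar u=h$ in $\G$; since each $\pi_i$ is a word over the generators of $H$ of length $\ell_i$ (= number of cells in the $i$-th corridor), we get $\ell_i\ge|h|_H=D_n$, and summing over the $n$ corridors gives $\Area_\Lambda(w_n)\ge nD_n$, i.e.\ $n\,\dist_H^\G(n)\preceq\d_\Lambda(n)$.

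For the upper bound $\d_\Lambda(n)\preceq n\,\d_\G\circ\dist_H^\G(n)$ I would construct an efficient diagram for a null-homotopic $w$ with $|w|\le n$ by peeling off pinches. While $w$ still contains a $t$-letter, Britton's Lemma provides a subword $t^{\varepsilon}c\,t^{-\varepsilon}$ ($\varepsilon=\pm1$) with $\bar c\in H$; as $|c|\le n$ and $|\bar c|_\G\le|c|$, one has $|\bar c|_H\le\dist_H^\G(n)$, so there is a word $\pi$ over the generators of $H$ with $\bar\pi=\bar c$ and $|\pi|\le\dist_H^\G(n)$. Replacing $t^{\varepsilon}c\,t^{-\varepsilon}$ by $c$ removes two $t$-letters, does not increase the length beyond $n$, and can be effected by inserting one corridor of length $\le\dist_H^\G(n)$ flanked by two $\G$-sub-diagrams that each fill the word $c\pi^{-1}$ of length $\le n+L\,\dist_H^\G(n)$ --- at most $2\,\d_\G\bigl((L+1)\dist_H^\G(n)\bigr)+\dist_H^\G(n)$ cells altogether. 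After at most $n/2$ steps no $t$-letter remains, and the surviving word over $A^{\pm1}$, being null-homotopic in $\G$, costs a further $\le\d_\G(n)$ cells. Summing the costs yields $\Area_\Lambda(w)\preceq n\,\d_\G\circ\dist_H^\G(n)$.

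The step I expect to be the main obstacle is making the corridor bookkeeping rigorous: the Bass--Serre tree argument is what rules out corridors internal to a single $t$-block and hence guarantees that \emph{all} $n$ corridors --- not just the innermost one --- contribute at least $D_n$ cells; care is likewise needed to check that the peeling procedure in the upper bound keeps the intermediate words of length $\le n$, and that composing $\d_\G$ with $\dist_H^\G$ is compatible with $\preceq$ (here one uses that, up to $\simeq$, $\dist_H^\G$ is non-decreasing and at least linear). None of this is deep; the essential content is the corridor picture itself, which converts the distortion of $H$ in $\G$ into area in $\Lambda$ and, conversely, bounds that area as soon as one knows how cheaply elements of $H$ can be expressed in the generators of $H$.
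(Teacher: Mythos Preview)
The paper does not itself prove this theorem --- it is cited from \cite{BH}, III.$\Gamma$.6.20, and invoked as a black box in the proof of Proposition~\ref{p:makeUQC}. Your $t$-corridor argument is correct and is essentially the standard proof that appears in \cite{BH}: the height (equivalently, Bass--Serre tree) analysis forces all $n$ corridors in a reduced diagram for $t^{-n}ut^{n}u^{-1}$ to run between the two $t$-blocks, each carrying a word in the $h_i$ equal to $h$ in $\Gamma$ and hence of length at least $|h|_H$, which gives the lower bound; and the Britton-pinch peeling, replacing $t^{\varepsilon}ct^{-\varepsilon}$ by $c$ at a cost of one short corridor plus two $\Gamma$-fillings of $c\pi^{-1}$, gives the upper bound. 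The bookkeeping points you flag (no corridors internal to a single $t$-block, intermediate words staying of length at most $n$, compatibility of composition with $\preceq$) are all handled correctly by your outline.
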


\noindent{\bf{Proof of Proposition \ref{p:makeUQC}.}}
Given a finite presentation for $Q$, we perform the Rips Construction 
to get $1\to N\to G\to Q\to 1$ with
a finite aspherical presentation for $G$ and hence a finite presentation 
$\< \tilde{X} \mid \tilde{T}\>$ for $G\times G$.
We also get an explicit finite generating set $S$ for the fibre
product $P<G\times G$
and can therefore present the HNN extension $Q^\dagger:=(G\times G)\dot\ast_P$
as
$$ Q^\dagger = \< \tilde{X}, t \mid \tilde{T}, \ [t^{-1},s]\ (s\in S)\>.$$
In Therorem  \ref{t:Pdist} we calculated $\dist_P^{G\times G} (n)\simeq \d_Q(n)$, so  
$$n\, \d_Q(n) \preceq \d_{Q^\dagger} (n) \preceq n\, (\d_Q(n))^2,$$
by Theorem \ref{hnn}, since $\d_{G\times G}(n)\simeq n^2$.  

$N$ is finitely generated, so if $Q$ is of type ${\rm{F}}_3$, then $P$
is finitely presented by the 1-2-3 Theorem \cite{BBMS}. Moreover $G$ has an aspherical
presentation, so $G\times G$ is of type ${\rm{F}}_m$ for all $m\in\N$.
A standard topological argument
shows that if $\G$ is type ${\rm{F}}_3$ and
$H<\G$ is finitely presented (i.e.~type ${\rm{F}}_2$), then
$\G\dot\ast_H$ is type ${\rm{F}}_3$. So if 
$Q$ is of type ${\rm{F}}_3$, then so is $ Q^\dagger$. Killing the stable letter $t$ retracts $Q^\dagger$ onto
$G\times G$,  so we can map it onto $G$ and hence $Q$.

Finally, since $G$ is torsion-free and hyperbolic, it has uniformly
quasigeodesic cyclics. And by Lemma \ref{l:UQC}, this 
property is inherited by $Q^\dagger$.
\qed

\subsection{Involving all standard Dehn functions}

\final*

\begin{proof}
Let $Q$ be a group
of type ${\rm{F}}_3$ with $\d_Q(n) = \d(n)$. Using Proposition \ref{p:makeUQC},
we exchange $Q$ for a group $Q^\dagger$ of type ${\rm{F}}_3$
that has uniformly quasigeodesic cyclics and 
\begin{equation}\label{e:n}
n \d(n) \preceq \d_{Q^\dagger} (n) \preceq n (\d(n))^2.
\end{equation}
By applying the Rips construction to $Q^\dagger$, we obtain
a hyperbolic group $H$ and an epimorphism
$H\onto Q^\dagger$ with finitely generated kernel. According to the
1-2-3 Theorem, the associated fibre product $P_\dagger<H\times H$ is finitely presented, and
according to Corollary \ref{c:forBR},
\begin{equation}\label{e:nn}
\d_{Q^\dagger} (n) \preceq \CL_{P_\dagger}(n) \preceq \d_{Q^\dagger}(n^2).
\end{equation}
Together, (\ref{e:n}) and  (\ref{e:nn}) finish the proof.
\end{proof}


\begin{thebibliography}{9999}

\bibitem{ahpz}
C.~R. Abbott, M.~Hagen, H.~Petyt, A.~Zalloum,
{\em Uniform undistortion from barycentres, and applications to hierarchically hyperbolic groups},
 Anal. Geom. Metr. Spaces {\bf 13} (2025), no.~1, Paper No. 20250027.


\bibitem{alibegovic} E. Alibegovic,
{\em Translation Lengths in {\rm{Out}}$(F_n)$},
Geom.~Dedicata {\bf{92}} (2002), 87--93.

\bibitem{BBMS} G. Baumslag, M.R. Bridson, C.F. Miller III, H. Short,
{\em Fibre products, non-positive curvature, and decision problems},
 Comment. Math. Helv. {\bf{75}}(2000), 457--477.
 
\bibitem{BRSa} J-C. Birget, A. Yu. Ol'shanskii, E. Rips, M.V. Sapir,
{\em Isoperimetric and isodiametric functions of groups},
Ann. of Math. {\bf 156}(2002),  345--466.



\bibitem{bow} B.H.~Bowditch,
 {\em Tight geodesics in the curve complex}, Invent.  Math.  {\bf 171} (2008),  281--300.

\bibitem{BB} N. Brady and M.R. Bridson, {\em There is only one gap in the isoperimetric spectrum}, Geom. Funct. Anal. (GAFA), {\bf 10} (2000), 1053-1070.
  
\bibitem{BRSh}  N.~Brady and T.R.~Riley and H. Short,
``The geometry of the word problem for finitely generated groups",
Adv.~Courses~Math., CRM Barcelona,  Birkh\"auser--Verlag, Basel 2007. 
 
 

\bibitem{mrb:bfs} M.R. Bridson,  
{\em The geometry of the word problem}, 
     in ``Invitations to geometry and topology",
    (M.R. Bridson and S.M. Salamon, eds.),  pp. 29--91, OUP, Oxford 2002.
     


\bibitem{mrb:icm} M.R. Bridson, {\em Non-positive curvature and complexity for finitely presented groups}, Intern.~Congr.~Math.
(Madrid 2006),  Vol. II,  pp.~961--987, Eur.~Math.~Soc.~Publ., Z\"{u}rich 2006.   


\bibitem{mrb:mrl} M.R. Bridson, {\em On the subgroups of right-angled Artin groups and mapping class groups}, 
Math. Res. Lett., {\bf{20}} (2013), 203--212.


\bibitem{mb:iff} M.R.~Bridson,  
{\em On the conjugacy problem for subdirect products of hyperbolic groups},  
Math. Ann.  {\bf 395} (2026), article 52.

\bibitem{BH}
M.R. Bridson and A. Haefliger, 
``Metric Spaces of Non-Positive Curvature",  Grundl.~Math.~Wiss.,  Vol. 319, Springer-Verlag, Heidelberg-Berlin, 1999.


\bibitem{BHMS} M.R. Bridson, J. Howie, C.F. Miller III,  H.~Short,
{\em On the finite presentation of subdirect products and the nature of residually free groups},
(with J. Howie, C.F. Miller III, and H.~Short), American J. Math., {\bf 135} (2013), 891--933.

\bibitem{BM} 
M.R.~Bridson and C.F.~Miller~III,
\newblock {\em Structure and finiteness properties of subdirect products of groups.}
\newblock Proc. London Math. Soc. (3) {\bf 98} (2009), 631--651.  

\bibitem{BR}
M.R. Bridson and T.R. Riley,
{\em Groups with fast-growing conjugator length functions},   arXiv:2512.23674.

 

\bibitem{BRS}
M.R.~Bridson, T.R. ~Riley and A.W.~Sale,
{\em Conjugator length in finitely presented groups},
in preparation. 

\bibitem{collins}  
D.J.~Collins,
{\em The word, power and order problems in finitely presented groups,} in "Word Problems, Decision Problems and the Burnside Problem in Group Theory" (W.W.~Boone, F.B.~Cannonito, R.C.~Lyndon, eds.), Studies in Logic and Foundations of Mathematics {\bf{71}}, North Holland, Amsterdam-London,  1973. 

\bibitem{dison} W. Dison, {\em Isoperimetric functions for subdirect products and Bestvina-Brady groups}, PhD thesis, Imperial College London, 2008.  arXiv:0810.4060.
 
\bibitem{GW}
C.~Gillis and F.~Wagner,
{\em Conjugator length in finitely presented groups},
 arXiv:2601.08053.

\bibitem{gromov} M. Gromov, {\em Hyperbolic groups}, in ``Essays in Group Theory”, pp. 75–263, Math. Sci. Res. Inst. Publ. vol. 8, Springer, New York 1987.

\bibitem{HW} F. Haglund and D.T. Wise, {\em Special cube complexes}, Geom. Funct. Anal. {\bf 17}(2008),  1551--1620.
 

 

\bibitem{mihailova}
K.A. Mihailova, {\em The occurrence problem for direct products of groups},
Dokl. Akad. Nauk SSSR {\bf 119} (1958) 1103--1105.

\bibitem{cfm:thesis} C.~F.~Miller~III, 
``On group-theoretic decision problems and their classification",
Ann. Math. Studies,
No. 68, Princeton University Press (1971). 


\bibitem{rips} E. Rips, {\em Subgroups of small cancellation groups}, Bull. London Math. Soc. {\bf 14}(1982),
45–47.
 

\end{thebibliography}
 \end{document}